\renewcommand{\geq}{\geqslant}
\renewcommand{\leq}{\leqslant}
\newtheorem{thm}{Theorem}
\newtheorem{prop}[thm]{Proposition}
\newtheorem{lem}{Lemma}
\newcommand{\be}{\begin{equation}}
\newcommand{\ee}{\end{equation}}
\colorlet{darkgreen}{green!50!black}
\definecolor{darkseagreen}{rgb}{0.56, 0.74, 0.56}
\definecolor{lightcyan}{rgb}{0.88, 1.0, 1.0}
\definecolor{lightblue}{rgb}{0.68, 0.85, 0.9}
\definecolor{palecerulean}{rgb}{0.61, 0.77, 0.89}
\definecolor{lgreen} {RGB}{180,210,100}
\definecolor{dblue}  {RGB}{20,66,129}
\definecolor{ddblue} {RGB}{11,36,69}
\definecolor{lred}   {RGB}{220,0,0}
\definecolor{nred}   {RGB}{224,0,0}
\definecolor{norange}{RGB}{230,120,20}
\definecolor{nyellow}{RGB}{255,221,0}
\definecolor{ngreen} {RGB}{98,158,31}
\definecolor{dgreen} {RGB}{78,138,21}
\definecolor{nblue}  {RGB}{28,130,185}
\definecolor{jblue}  {RGB}{20,50,100}
\definecolor{Apricot} {RGB}{255, 170, 123} 
\definecolor{dpurple}  {RGB}{53,21,93}
\def\C{{\mathbb C}}
\def\Z{{\mathbb Z}}
\def\1{\mathbb{1}}
\def\Z{\mathbb{Z}}
\def\C{\mathbb{C}}
\def\Q{\mathbb{Q}}
\renewcommand{\epsilon}{\varepsilon}
\renewcommand{\th}{\vartheta}
\newcommand{\thh}{\tilde{\vartheta}}
\definecolor{darkgreen}{rgb}{0,0.4,0}
\definecolor{MyDarkBlue}{rgb}{0,0.08,0.50}
\definecolor{BrickRed}{rgb}{0.65,0.08,0}
\newcommand{\notea}[1]{\textcolor{red}{#1}}
\title[Logarithmic terms in discrete heat kernel expansions]{Logarithmic terms in discrete heat kernel expansions in~the~quadrant}
\author{Andrew Elvey Price}
\address{Institut Denis Poisson,
Universit\'e de Tours and Universit\'e d'Orl\'eans, CNRS, France}
\email{andrew.elvey@univ-tours.fr}
\author{Andreas Nessmann}
\address{Institut Denis Poisson,
Universit\'e de Tours and Universit\'e d'Orl\'eans, France and Institut f\"ur Diskrete Mathematik und Geometrie,
Technische Universit\"at Wien, Vienna, Austria}
\email{andreas.nessmann@tuwien.ac.at}
\author{Kilian Raschel}
\address{Laboratoire Angevin de Recherche en Math\'ematiques, Universit\'e d'Angers, CNRS, France}
\email{raschel@math.cnrs.fr}\thanks{This work has received funding from the European Research Council (ERC) under the European Union's Horizon
2020 research and innovation programme under the Grant Agreement No.~759702, from Centre Henri Lebesgue,
programme ANR-11-LABX-0020-0 and from the pr
oject DeRerumNatura, programme ANR-19-CE40-0018.}
\date{\today}
\begin{document}

\begin{abstract}
In the context of lattice walk enumeration in cones, we consider the number of walks in the quarter plane with fixed starting and ending points, prescribed step-set and given length. After renormalization, this number may be interpreted as a discrete heat kernel in the quadrant. We propose a new method to compute complete asymptotic expansions of these numbers of walks as their length tends to infinity, based on two main ingredients: explicit expressions for the underlying generating functions in terms of elliptic Jacobi theta functions along with a duality known as Jacobi transformation. This duality allows us to pass from a classical Taylor expansion of the series to an expansion at the critical point of the model. We work through two examples. First, we present our approach on the well-known Kreweras model, which is algebraic, and show how to obtain a complete asymptotic expansion in this case. We then consider a more generic (so-called infinite group) model, and find the associated complete asymptotic expansion. In this second case, we prove the existence of logarithmic terms in the asymptotic expansion, and we relate the coefficients appearing in the expansion to polyharmonic functions. To our knowledge, this is the first time that logarithmic terms have been observed in the asymptotics of a class of lattice walks confined to a quadrant. 
\end{abstract}

\maketitle

\section{Introduction and main results}

\noindent In the context of lattice walk enumeration in cones, we consider the number of walks $q(A,B;n)$ in a given cone with fixed starting and ending points, respectively denoted by $A$ and $B$, prescribed step-set (or transition probabilities) and given length $n$; the dependency of $q$ on the cone and the step-set is removed from our notation. The first-term asymptotics of $q(A,B;n)$ is known \cite{DeWa-15} for a wide class of cones and walk models, resulting in (ignoring periodicity)
\begin{equation}
\label{eq:asymptotic_DeWa-15}
   q(A,B;n) \sim \kappa v(A) \hat{v}(B) \frac{\mu^n}{n^\alpha},  
\end{equation}
where $\kappa>0$ is a universal constant, $\mu$ is the exponential growth of the model, $\alpha$ the critical exponent, and $v,\hat v$ certain discrete harmonic functions; see \cite[Eq.~(12)]{DeWa-15}. Preceding this general statement, a number of asymptotic results were obtained for specific cases, as will be recalled later on in the introduction (see Section~\ref{sec:earlier_asymptotics}).

The question of complete asymptotic expansions for lattice walk problems was addressed only recently. There are actually various relevant problems:
\begin{itemize}
    \item How will lower-order terms of the asymptotics of  $q(A,B;n)$ depend on the start and end points $A,B$?
    \item How can one access such complete asymptotic expansions?
    \item Will all appearing terms be a combination of exponentials and polynomials (such as $\frac{\mu^n}{n^\alpha}$), or will we observe the emergence of more complicated terms, such as logarithms?
\end{itemize}
These questions were the key motivations to the works \cite{ChFuRa-20,Ne-22,Ne-23}. More specifically, it is shown in \cite{ChFuRa-20,Ne-22} that lower-order terms should involve so-called discrete polyharmonic functions. Moreover, in \cite{Ne-23}, complete asymptotic expansions are obtained for a number of walk models, which all have the property that their reflection group is finite, and thus have an orbit-sum expression for the generating function which is suitable for saddle point analysis. In this context, only exponential-polynomial terms appear. So far the probabilistic method of \cite{DeWa-15} only provides one-term asymptotics, nonetheless, some progress on further terms is expected in dimension~$1$ \cite{DeTaWa-23}.

\subsection{A glimpse at our main results}
In this work, we propose a new method allowing us to address the three questions above. More precisely, we will analyse the two models of walks in the quarter plane represented in Figure~\ref{fig:step_sets}, which share the property that their generating function
\begin{equation}
\label{eq:def_series}
    Q(x,y) = Q(x,y;t):=\sum_{n\geq 0}\sum_{i,j\geq0} q((0,0),(i,j) ;n) x^iy^jt^n
\end{equation}
can be expressed in terms of the Jacobi theta function
\begin{align}
\label{eq:def_theta1}
    \th(z)=\th(z|\tau)&=\sum_{n=0}^{\infty}(-1)^n e^{i\pi\tau (n+\frac{1}{2})^{2}}\left(e^{(2n+1)iz}-e^{-(2n+1)iz}\right)\\=\th(z,q)&=2i\sum_{n=0}^{\infty}(-1)^n q^{(n+\frac{1}{2})^{2}}\sin\bigl((2n+1)z\bigr),\label{eq:def_theta2}
\end{align}
where $q=e^{i\pi\tau}$. Conveniently, the function \eqref{eq:def_theta2} is a series in $q$ which can be rapidly calculated. Alternatively, we can think of $\th(z|\tau)$ as an analytic function as long as $\tau\in\mathbb{C}$ has positive imaginary part, as this ensures that the series converges.

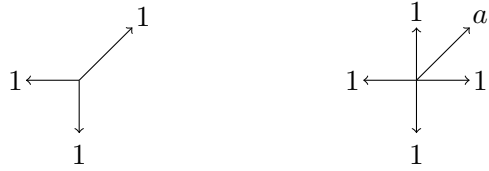
\begin{figure}
\begin{center}
\begin{tikzpicture}[scale=.7] 
    \draw[->,white] (1,2) -- (0,-2);
    \draw[->,white] (1,-2) -- (0,2);
    \draw[->] (0,0) -- (1,1);
    \draw[->] (0,0) -- (-1,0);
    \draw[->] (0,0) -- (0,-1);
    \node at (-1.2,0) {$1$};
    \node at (1.2,1.2) {$1$};
    \node at (0,-1.4) {$1$};
   \end{tikzpicture}\qquad\qquad\qquad
\begin{tikzpicture}[scale=.7] 
    \draw[->,white] (1,2) -- (0,-2);
    \draw[->,white] (1,-2) -- (0,2);
    \draw[->] (0,0) -- (1,1);
    \draw[->] (0,0) -- (1,0);
    \draw[->] (0,0) -- (0,-1);
    \draw[->] (0,0) -- (-1,0);
    \draw[->] (0,0) -- (0,1);
    \node at (-1.2,0) {$1$};
    \node at (1.2,1.2) {$a$};
    \node at (0,-1.4) {$1$};
    \node at (0,1.3) {$1$};
    \node at (1.2,0) {$1$};
   \end{tikzpicture}
\end{center}
\caption{The models considered in this work: on the left, the algebraic (unweighted)\ Kreweras model will serve as an example for our approach (Section~\ref{sec:Kreweras}); on the right, a more generic model with associated infinite reflection group (Section~\ref{sec:a-model}). In the second model, we allow a weight $a>0$ for the jump $(1,1)$, our motivation being to show how our results depend on this parameter, in particular in the limit $a\to0$, at which the model degenerates into the simple walk (to the four nearest neighbors).}
\label{fig:step_sets}
\end{figure}

More specifically, among all $79$ relevant quadrant walk models studied in \cite{BMMi-10}, $13$ have the property of admitting a so-called decoupling function, see \cite{BeBMRa-21}, which results in rather simple expressions for the generating functions \eqref{eq:def_series} using elliptic functions \cite{BeBMRa-21,EP-20,EP-22a,EP-22b}. Of these $13$ models, $4$ (resp.\ $9$) admit a finite (resp.\ an infinite) reflection group. While our approach would work for all these decoupled models, for brevity we choose to focus in the present work on the two examples of Figure~\ref{fig:step_sets}. See Propositions~\ref{prop:expression_Kreweras} and \ref{prop:expression_Q(x,0)_a-model} for instances of such  expressions in terms of theta functions.

To understand our main result, we mention here the Jacobi transformation, namely the symmetry $q\longleftrightarrow \hat q$ given by
\begin{equation}
\label{eq:Jacobi_transformation}
   \log(q)\log(\hat{q})=\pi^{2},
\end{equation}
and the associated the Jacobi identity
\begin{equation}
\label{eq:Jacobi_identity}
    \th(z,q)=\sqrt{-\frac{\log(\hat{q})}{\pi}}\exp\left(\frac{\log(\hat{q})z^{2}}{\pi^{2}}\right)\th\left(\frac{i}{\pi}\log(\hat{q})z,\hat{q}\right).
\end{equation}
One of our main contributions is that for the lattice walk models under consideration, this transformation admits a direct combinatorial interpretation, in that applying it on the theta-expression for the generating function, we will exactly obtain the expression of the function at the critical point. From here, using classical singularity analysis, we can deduce immediately a complete series asymptotic expansion. In other words:
\begin{itemize}
    \item $q=0$ ($\hat q=1$) $\longleftrightarrow$ series (Taylor) expansion of the series \eqref{eq:def_series} at $t=0$;
    \item $\hat q=0$ ($q=1$) $\longleftrightarrow$ series expansion of the series \eqref{eq:def_series} at the critical point $t=\frac{1}{\mu}$.
\end{itemize}
While the duality \eqref{eq:Jacobi_transformation} is classical in the physics literature, its use to obtain asymptotic expansion at criticality seems to be less studied. Let us however mention the work by Kostov \cite{Ko-00} on the six-vertex model on a random lattice.

The main novelty in our complete asymptotics is the presence of logarithmic terms, see for instance \eqref{eq:series_expansion_log}. Such terms did not appear yet in the lattice walk literature, nor did they appear in the continuous setting, when deriving complete asymptotic expansions of the continuous heat kernel in cones \cite{ChFuRa-20}.  

Let us finally mention that from a viewpoint of potential theory, we provide examples of discrete heat kernel asymptotic estimates in two-dimensional cones. Recall that the discrete heat kernel in a cone $K$ is simply the probability $\mathbb P_x(S_n=y,\tau_K>n)$ that a random walk started at $x$ hits the point $y$ at time $n$ without exiting a given cone $K$ (the condition $\tau_K>n$, with $\tau_K$ denoting the exit time from $K$); this relation explains the title of this work.

\subsection{Earlier literature on asymptotic expansions for lattice walk models}
\label{sec:earlier_asymptotics}

\subsubsection*{The kernel method and algebraic solutions}

In lattice walk enumeration, a first source of asymptotic expansions is provided by the kernel method. In dimension $1$, the kernel method yields algebraic expressions for the generating function of the numbers of excursions (with given length, starting and ending points), starting from which it is possible to compute arbitrarily precise asymptotic expansions of the coefficients, using standard singularity analysis, see \cite{BaFl-02,BaWa-17} (although in these references, only one-term asymptotics are derived). In a few dimension $2$ and $3$ cases, the kernel method (or subtle variations of it, using the idea of half-orbit sums) also yields algebraic expressions for the generating function, for example for Kreweras' model, see \cite{FlHa-84,BM-02,BM-05,BMMi-10}, Gessel's model \cite{BM-16}, and \cite{BoBMKa-16} for some three-dimensional models. Again, in these cases, it is possible to deduce precise asymptotic expansions of the numbers of walks.

\subsubsection*{The kernel method and transcendental, D-finite solutions}

In dimension $2$ and more, the kernel method may also yield D-finite expressions for the generating functions as positive parts of rational functions, see \cite{BM-02,BMMi-10} for small steps in dimension $2$, \cite{BoBMKa-16} for small steps in dimension $3$ and \cite{BoBMMe-21} for large steps in dimension $2$. Let us notice that these ideas go beyond the case of quadrant (or octant) walks and also apply, for instance, in the framework of walks in the slit plane \cite{BM-01,BMSc-02} or the three quarter plane \cite{BMWa-20,BM-21,EP-22a,EP-22b}. 

There are several ways to pursue and to deduce from these expressions the asymptotics of the numbers of walks. In a few cases, it is possible to extract the coefficients in an explicit way (for instance, for Gouyou-Beauchamps walks), and then to deduce asymptotic expansions starting from these closed-form expressions. Many examples are provided in \cite{BM-02,BMMi-10,BoBMKa-16,CoMeMiRa-17,BoBMMe-21}. See \cite{Ne-23} for the complete asymptotic expansions in these cases.

Another possibility to continue is to use the modern theory of analytic combinatorics in several variables (ACSV), see for instance \cite{CoMeMiRa-17,MeWi-19} for examples of applications in the framework of lattice walks. In principle, using ACSV, one can deduce from these positive part expressions full asymptotic expansions for the numbers of walks. However, the applicability of the method is still restricted and the constants appearing in the prefactors of the asymptotic terms are not always easily computable.

\subsubsection*{The kernel method and non-D-finite solutions}
In a small number of cases, the (iterated) kernel method also applies to more singular models, associated to non-D-finite generating functions, see for instance \cite{MeMi-14}. Then it is possible to deduce some asymptotic estimates.

\subsubsection*{Weyl chambers}
Beyond the case of the quarter plane, Weyl chambers represent another class of cones, which is particularly popular (because of its links with non-intersecting paths and other probabilistic and physical models), and for which various asymptotic estimates exist. For related references, we refer to \cite{Bi-91,Bi-92} (with a strong emphasis on the link with representation theory), \cite{GeZe-92,EiKo-08,KoSc-10,DeWa-10,Fe-14} (in relation with reflectable random walks) and \cite{MeMi-16,MeWi-19,MiSi-20} (on highly symmetric lattice path models).

\subsubsection*{Guess and prove}
In a few works, the authors were able to guess the asymptotic behavior of various lattice path sequences, see \cite{BoKa-09} and \cite{BaKaYa-16}. There are also various experiments by Tony Guttmann \cite{Gu-19}, which are mainly not published.

\subsubsection*{Probabilistic and potential theory}
There is a simple and fundamental relation between numbers of walks and some probabilities. For example, there is equality between the total number of paths in a cone $K$ and the so-called survival probability $\mathbb P_x(\tau_K>n)$ (multiplied by some exponential factor); similarly, the number of excursions is directly related to the local probability $\mathbb P_x(S_n=y,\tau_K>n)$. Probabilistic local limit theorems (which by definition consist of the asymptotic derivation of the previous local probabilities) may therefore be directly translated into combinatorial estimates.

Using these ideas, one may first deduce the exponential growth of various numbers of walks. In particular, the exponential growth of the number of excursions (resp.\ total number of walks) is given in \cite{Ga-07} (resp.\ \cite{GaRa-16,JoMiYe-18}).

These rough estimates are refined in \cite{DeWa-15,Du-14}, where the authors obtain precise asymptotics of the numbers of excursions and of the total numbers of walks (the last result under the additional hypothesis of a drift equal to zero or directed to the vertex of the cone).

In the particular case of quadrant walk models with infinite group, these asymptotics are worked out in \cite{BoRaSa-14}. In particular, the critical exponent is shown to be non-rational in all these infinite group models.

Another fruitful approach is based on harmonic functions, which are deeply related to the asymptotics of lattice walks. Harmonic functions first appear in this context as the prefactors in the asymptotic estimates \cite{DeWa-15} of the number of excursions or the total number of walks. Moreover, their polynomial growth encodes the critical exponent of the number of excursions \cite{Va-99,DALaMu-16,Mu-19}. This approach has been generalized in \cite{ChFuRa-20,Ne-22}, where formal asymptotic expansions are derived in terms of polyharmonic functions.

\subsubsection*{Continuous heat kernel estimates}
Let $K$ be some cone in $\mathbb R^d$ and consider the Brownian motion $(B_t)_{t\geq0}$ killed at the boundary of $K$. Denote by $p(x,y;t)$ its transition density, that is the probability density function of the transition probability kernel
\begin{equation*}
     \mathbb P_x ( B_t \in dy,  \tau_K >t  ),
\end{equation*}
where $\tau_K$ is the first exit time of $K$. Recall the  well-known fact that $p(x,y;t)$ corresponds to the heat kernel, i.e., the fundamental solution of the heat equation on $K$ with Dirichlet boundary condition, see for instance \cite{De-87,BaSm-97}.
In \cite{ChFuRa-20}, the authors prove that the heat kernel  admits a complete asymptotic expansion in terms of continuous polyharmonic functions for the Laplacian.  See \cite{ArNaCr-83} for a general introduction to polyharmonic functions.

\subsubsection*{Boundary value problems and applications}
Following the pioneering works of Iasnogorodski and Fayolle \cite{FaIa-79}, Malyshev \cite{Ma-72}, see also \cite{FaIaMa-17}, Cohen and Boxma \cite{CoBo-83}, Cohen \cite{Co-92}, functional equations may be written for the generating functions of various probabilities (also for numbers of walks), and then boundary value problems may be deduced. This method results in contour integral expressions for the generating functions, on which one may try to apply singularity analysis, see \cite{FaRa-12}.

\subsubsection*{Other techniques}
Finally, let us conclude by mentioning a few other techniques. In dimension $3$, the numbers of walks may be connected to the computation of triangle eigenvalues \cite{BoPeRaTr-20,DaSa-20}. In dimension 2, there are also hypergeometric expressions \cite{BoChvHKaPe-17}, which lead to precise asymptotic estimates. One may consult \cite{Bo-HDR} for a nice and complete survey of lattice walk problems, which in particular contains many asymptotic results.

\subsection{Notation}
Given a model with a step-set $\mathcal{S}\subset \{-1,0,1\}^2$ and associated weights $\left(\omega_s\right)_{s\in\mathcal{S}}$, we will denote by $S(x,y)$ the step counting (Laurent) polynomial, given by 
\begin{equation}\label{eq:def_steppoly}
    S(x,y):=\sum_{(i,j)\in\mathcal{S}}\omega_{i,j}x^iy^j.
\end{equation}
One can then proceed to define the kernel (see e.g.~\cite{BMMi-10}) via 
\begin{equation}\label{eq:def_kernel}
    K(x,y)=xy\left[1-tS(x,y)\right].
\end{equation}
Associated to the kernel is an algebraic curve $\mathcal{C}$, which for non-singular models (by definition, singular models are step-sets whose all jumps lie in a linear half-plane)\ and small $t$ is elliptic, that is, of genus $1$, see e.g.~\cite{FaIaMa-17, HaSi-21,Dui-10}.  This curve is defined via 
\begin{equation}\label{eq:def_curve}
    \mathcal{C}:=\{(x,y)\in\overline{\C}\times\overline{\C}: K(x,y)=0\}.
\end{equation}
Here, $\overline{\C}$ is the projective closure of $\C$. For models with small steps, the path counting function \eqref{eq:def_series} satisfies the functional equation \cite{BMMi-10}
\begin{equation} K(x,y)Q(x,y)=K(x,0)Q(x,0)+K(0,y)Q(0,y)-K(0,0)Q(0,0)+xy.
    \label{eq:functional_eq}
\end{equation}



\section{An algebraic example: the Kreweras model}
\label{sec:Kreweras}
\subsection{Definition of the model}

By definition, the Kreweras model corresponds to the step-set $\{\leftarrow,\downarrow,\nearrow\}$, and the kernel 
\begin{equation*}
    K(x,y)=xy\left[1-t\left(xy+\frac{1}{x}+\frac{1}{y}\right)\right],
\end{equation*}
see Figure~\ref{fig:step_sets}. The functional equation \eqref{eq:functional_eq} for the path generating function \eqref{eq:def_series} takes the form 
\begin{equation}
\label{eq:functional_eq_Kreweras}
   K(x,y)Q(x,y) = xy - txQ(x,0) - tyQ(0,y).
\end{equation}
It is known that the generating function of this model admits the following expression:
\begin{equation}
\label{eq:expression_Kreweras_known}
    Q(x,0) = \frac{1}{xt}\left(\frac{1}{2t}-\frac{1}{x}-\left(\frac{1}{W}-\frac{1}{x}\right)\sqrt{1-xW^2}\right),
\end{equation}
with $W$ being the unique power series in $t$ solution to $W=t(2+W^3)$, see \cite{BM-02,BM-05,BMMi-10}.

\subsection{Parametrisation of the zero-set of the kernel}

We want to find a parametrisation of the elliptic curve $\mathcal{C}$ as defined in \eqref{eq:def_curve}, meaning we want to find functions $X(z)$ and $Y(z)$ meromorphic on $\mathbb C$ such that
\begin{equation}
\label{eq:def_parametrisation}
    \mathcal C = \{(X(z),Y(z)) : z\in\overline{\mathbb{C}}\}.
\end{equation}
Such a parametrisation has been obtained in \cite{FaIaMa-17,EP-20}. To state the result of \cite{EP-20}, we will utilise several classical properties of the theta function $\th(z)$; for now we mention the following three important properties, which can be immediately deduced from the series \eqref{eq:def_theta1} and \eqref{eq:def_theta2}:
\begin{equation}
\label{eq:elementary_property_theta}
    \th(\pi-z)=\th(z),\quad \th(-z)=-\th(z)\quad\text{and}\quad\th(z+\pi\tau)=-e^{-i\pi\tau-2iz}\th(z).
\end{equation}
First, define $\tau\in i\mathbb R_+$ in terms of $t\in(0,\frac{1}{3})$ as follows: 
\begin{equation}
\label{eq:relation_t_q_Kreweras}
   t =e^{-i\gamma/3}\frac{\th'(0|\tau)}{4i\th(\gamma|\tau)+6\th'(\gamma|\tau)},
\end{equation}
with $\gamma=\pi\frac{\tau}{3}$. 
The fact that $\tau$ is defined in terms of $t$ using an equation as~\eqref{eq:relation_t_q_Kreweras} will be shown in higher generality in Lemma~\ref{lem:unif_a-model}. Then setting
\begin{equation}
\label{eq:parametrisation_Kreweras}
    \left\{\begin{array}{rcl}
    X(z)&=&\displaystyle e^{-4i\gamma/3}\frac{\th(z|\tau)\th(z-\gamma|\tau)}{\th(z+\gamma|\tau)\th(z-2\gamma|\tau)},\medskip\\
    Y(z)&=&\displaystyle X(z+\gamma),
    \end{array}\right.
\end{equation}
Lemma~3 in \cite{EP-20} asserts that Equation~\eqref{eq:def_parametrisation} holds. In particular, we know that $X(z)$ has its only (double) pole at $z=-\gamma$ and $Y(z)$ at $z=\gamma$, see \eqref{eq:elementary_property_theta}. 

We can use \eqref{eq:relation_t_q_Kreweras} to write $t$ as a series in $q$, and as a consequence find an inverse, giving us 
\begin{equation} 
\label{eq:t_in_q}
   q=t^{9/2}+\frac{45}{2}t^{15/2}+\frac{4023}{8}t^{21/2}+\frac{184341}{16}t^{27/2}+\mathcal{O}\bigl(t^{33/2}\bigr).
\end{equation}
Notice that in \cite{EP-20}, the equivalent of \eqref{eq:t_in_q} is not exactly the same due to a slightly different choice in parametrisation: what is $q$ here corresponds to $q^{3/2}$ in \cite{EP-20}.

\subsection{Explicit expression for the generating function}

In order to obtain an expression for $Q(x,y)$, we will first find $Q(x,0)$ explicitly, and then make use of the functional equation \eqref{eq:functional_eq_Kreweras}. To that purpose, we will use an approach utilizing an invariant for this model \cite{BeBMRa-21,EP-20,EP-22a,EP-22b} and recall the proof of the following:
\begin{prop}[\cite{EP-20}]
\label{prop:expression_Kreweras}
We have
\begin{equation}\label{eq:Q(X(z),0)_formula}
   tX(z)Q(X(z),0)=\frac{1}{2t}-\frac{1}{X(z)}-J(z),
\end{equation} 
where (with $\gamma = \pi \frac{\tau}{3}$) 
\begin{equation}\label{eq:J(z)_formula}
J(z)=-e^{-4i\gamma/3}\frac{\th(2\gamma|\tau)\th'(0|\frac{\tau}{3})}{\th(\pi/2|\frac{\tau}{3})\th'(0|\tau)}\frac{\th(z+\pi/2|\frac{\tau}{3})}{\th(z|\frac{\tau}{3})}.
\end{equation}
\end{prop}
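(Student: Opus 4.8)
The plan is to turn the functional equation \eqref{eq:functional_eq_Kreweras} into an additive relation on the curve $\mathcal C$ and then to recognise the auxiliary function $J(z)$ appearing in \eqref{eq:Q(X(z),0)_formula} as an elliptic function fully determined by its poles and a normalisation --- this is the ``invariant'' philosophy of \cite{BeBMRa-21,EP-20}. Concretely, I would first use that the Kreweras step set is invariant under $x\leftrightarrow y$, so that $Q(0,y)=Q(y,0)$, and restrict \eqref{eq:functional_eq_Kreweras} to $\mathcal C$ (where $K(x,y)=0$) to get $xy=txQ(x,0)+tyQ(y,0)$. Combining this with the kernel relation $S(x,y)=\frac1t$, valid on $\mathcal C$ away from the axes by \eqref{eq:def_kernel}, that is $\frac1x+\frac1y+xy=\frac1t$, I would then substitute the parametrisation \eqref{eq:parametrisation_Kreweras} and the relation $Y(z)=X(z+\gamma)$; writing $J(z):=\frac1{2t}-\frac1{X(z)}-tX(z)Q(X(z),0)$ as in \eqref{eq:Q(X(z),0)_formula}, the two relations combine into exactly
\[
J(z)+J(z+\gamma)=0 .
\]

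Next I would read off the elliptic structure of $J$. The four theta factors in \eqref{eq:parametrisation_Kreweras} together with \eqref{eq:elementary_property_theta} show that $X$ is invariant under $z\mapsto z+\pi$ and under $z\mapsto\gamma-z$, hence so is $J$; combined with $J(z)+J(z+\gamma)=0$ this yields $J(z+2\gamma)=J(z)$, so $J$ is an elliptic function for the lattice $\pi\Z+2\gamma\Z=\pi\Z+\frac{2\pi\tau}{3}\Z$, additionally anti-periodic under $z\mapsto z+\gamma$. I would then locate the poles: $X$ has a simple zero at $z=0$, while $tX(z)Q(X(z),0)\to tX(0)Q(0,0)=0$ there, so $J$ has a simple pole at $z=0$ with residue $-1/X'(0)$, and by anti-periodicity a matching simple pole at $z=\gamma$. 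The crux is showing these are the \emph{only} poles in a fundamental domain: the remaining candidate is $z=-\gamma$, the double pole of $X$, and here I would invoke the known behaviour of $Q(x,0)$ --- e.g.\ \eqref{eq:expression_Kreweras_known}, which gives $xQ(x,0)=O(\sqrt{x})$ as $x\to\infty$, equivalently an a priori bound on the order of the invariant $txQ(x,0)+\frac1x$ over $x=\infty$ --- to check that the apparent singularity of $tX(z)Q(X(z),0)$ at $z=-\gamma$ is, modulo the lattice $\pi\Z+2\gamma\Z$, nothing but the pole already counted at $z=\gamma$, and that no singularity of $Q(x,0)$ in the variable $x$ contributes a further pole.

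Finally I would perform the identification. The theta quotient $g(z):=\th(z+\pi/2|\tfrac{\tau}{3})/\th(z|\tfrac{\tau}{3})$ is, by the quasi-periodicity in \eqref{eq:elementary_property_theta}, elliptic for $\pi\Z+\tfrac{2\pi\tau}{3}\Z$ with exactly two simple poles in a fundamental domain, at $z=0$ and $z=\gamma$ (the zeros of $\th(\cdot|\tfrac{\tau}{3})$ lying on $\pi\Z+\gamma\Z$), and it satisfies $g(z+\gamma)=-g(z)$ precisely because the half-shift equals $\pi/2$. Hence $J-Cg$ has at worst simple poles at $0$ and $\gamma$; choosing $C$ so that the residue of $J$ at $z=0$ is cancelled makes the residue at $\gamma$ vanish as well (by anti-periodicity), so $J-Cg$ is an entire elliptic function, hence constant, and the constant is $0$ since $J-Cg$ is $\gamma$-anti-periodic. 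Matching residues at $z=0$ gives $C=-\frac1{X'(0)}\cdot\frac{\th'(0|\tau/3)}{\th(\pi/2|\tau/3)}$, and computing $X'(0)$ from \eqref{eq:parametrisation_Kreweras} using $\th(-w)=-\th(w)$ turns $C$ into the prefactor of \eqref{eq:J(z)_formula}. I expect the one delicate point to be the pole count --- ruling out an extra pole at $z=-\gamma$, i.e.\ genuinely controlling $Q(x,0)$ as $x\to\infty$; the rest is bookkeeping with theta identities.
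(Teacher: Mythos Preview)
Your plan is correct and follows essentially the same invariant route as the paper: define $J(z)=\frac1{2t}-\frac1{X(z)}-tX(z)Q(X(z),0)$, show it is elliptic with periods $\pi$ and $2\gamma$, pin down its poles as simple at $\pi\Z+\gamma\Z$, match against the theta quotient $g(z)=\th(z+\frac\pi2\mid\frac\tau3)/\th(z\mid\frac\tau3)$ by cancelling the residue at $0$, and kill the remaining constant via the $\gamma$-anti-periodicity.

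The only place you diverge from the paper is your ``delicate point'', the pole at $z=-\gamma$, and there your proposed detour through \eqref{eq:expression_Kreweras_known} is unnecessary. The paper does not use the $x\leftrightarrow y$ symmetry nor any control on $Q(x,0)$ at large $x$; instead it keeps the two representations $J(z)=\tfrac1{2t}-A(X(z))=-\tfrac1{2t}+B(Y(z))$ with $A(x)=\tfrac1x+txQ(x,0)$ and $B(y)=\tfrac1y+tyQ(0,y)$, using the first where $X(z)$ is small and the second where $Y(z)$ is small. Near $z=-\gamma$ one has $Y(-\gamma)=X(0)=0$, so the $B$-expression applies and immediately gives a simple pole. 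In your own language this is exactly the anti-periodicity you already derived (since $B=A$ by symmetry): near $z=-\gamma$ write $J(z)=-J(z+\gamma)$ and analyse $J$ near $z+\gamma=0$ via your defining formula. This also disposes of your worry about finite singularities of $Q(x,0)$ in $x$, since one never needs to evaluate $Q$ outside its disk of convergence.
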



\begin{proof}
We make use of the fact that:\begin{align}
\label{eq:invariant_1}
    X(z)Y(z)+\frac{1}{X(z)}+\frac{1}{Y(z)}&=\frac{1}{t},\\
    \label{eq:invariant_2}
    tX(z)Q(X(z),0)+tY(z)Q(0,Y(z))&=X(z)Y(z).
     \end{align}
     Here, \eqref{eq:invariant_1} is due to the fact that $(X(z),Y(z))$ parametrises the kernel curve $\mathcal{C}$, see \eqref{eq:def_parametrisation}, and \eqref{eq:invariant_2} is a reformulation of the functional equation \eqref{eq:functional_eq_Kreweras}. Letting 
     \begin{equation}\label{eq:A(z)_definition}
         A(x):=\frac{1}{x}+xtQ(x,0)\quad \text{and}\quad
         B(y):=\frac{1}{y}+ytQ(0,y),
     \end{equation}
     we find that
     \begin{equation}\label{eq:J(z)_definition}
         J(z):=\frac{1}{2t}-A(X(z))=-\frac{1}{2t}+B(Y(z)).
     \end{equation}
     By symmetry of our model we could in fact conclude that in our case we have $A(x)=B(x)$, and hence $J(-z)=-J(z)$, but this is not necessary for our approach.
     
     Knowing that $J(z)$ can be expressed as a function of $X(z)$ and as a function of $Y(z)$ at the same time, it must have a range of symmetry properties:
     \begin{itemize}
         \item $J(\gamma-z)=J(z)$, because $J(z)$ is a function of $X(z)$;
         \item $J(-\gamma-z)=J(z)$, because $J(z)$ is a function of $Y(z)$;
         \item $J(2\gamma+z)=J(z)$ by the above;
         \item $J(z+\pi)=J(z)$ is inherited from this property of $\th$ functions, see \eqref{eq:elementary_property_theta}. 
     \end{itemize}
In particular, we know that $J$ is doubly periodic with periods $\pi$ and $2\gamma$. We thus construct as a candidate for $J(z)$ a function which has simple poles at $\{k\pi+\ell\gamma$: $(k,\ell)\in\Z^2\}$. This way
we obtain \eqref{eq:J(z)_formula}, by scaling with an appropriate constant and verifying that all poles of the difference between \eqref{eq:J(z)_formula} and \eqref{eq:J(z)_definition} vanish.
Using this together with \eqref{eq:A(z)_definition} and \eqref{eq:J(z)_definition}, we obtain \eqref{eq:Q(X(z),0)_formula}.
\end{proof}

\noindent From Prop.~\ref{prop:expression_Kreweras} we can deduce the representation~\eqref{eq:expression_Kreweras_known} of $Q(x,0)$ in the following manner: Defining $W:=X(\pi/2)=Y(\pi/2)$, the equation $W=t(2+W^3)$ follows from substituting $z=\frac{\pi}{2}$ into \eqref{eq:invariant_1}. Next, defining 
\[U(z):=\frac{\th(\gamma|\tau)}{\th(\frac{\pi}{2}+\gamma|\tau)}\frac{\th(z+\frac{\pi}{2}+\gamma|\tau)}{\th(z+\gamma|\tau)},\]
we have the equations
\begin{equation*}
    \frac{J(z)}{U(z)}+\frac{1}{X(z)}=\frac{1}{W}\quad \text{and}\quad 
    \frac{U(z)^2-1}{X(z)}=-W^2,
\end{equation*}
as in both cases the left-hand side is an elliptic function with no poles, where the constant value can be determined explicitly by setting $z$ to $\frac{\pi}{2}$ or $\frac{\pi}{2}-\gamma$. Combining these three equations yields \eqref{eq:expression_Kreweras_known}.

To keep computations short, in this section we will only give a representation for $Q(0,0)$ rather than all of $Q(x,y)$. Conveniently, $X(0)=0$, so it suffices to take the $z\to 0$ limit of our expression for $Q(X(z),0)$. Expanding both sides of \eqref{eq:Q(X(z),0)_formula} as series in $z$ then yields 
\begin{multline}
    \label{eq:Q(0,0)_formula}
    Q(0,0)=\frac{q^{2/3}\th(\gamma|\tau)^2}{\th'(0|\tau)^2}\times\\\left[\left(2
    +\frac{\th''\left(\frac{\pi}{2}|\frac{\tau}{3}\right)}{2\th\left(\frac{\pi}{2}|\frac{\tau}{3}\right)}-\frac{\th'''(0|\frac{\tau}{3})}{6\th'(0|\frac{\tau}{3})}+\frac{\th'''(0|\tau)}{6\th'(0|\tau)}\right)-\frac{6i\th'(\gamma|\tau)-\frac{1}{2}\th''(\gamma|\tau)}{\th(\gamma|\tau)}-\frac{4\th'(\gamma|\tau)^2}{\th(\gamma|\tau)^2}\right].
\end{multline}
Rewriting this as a series in $q$ and making use of \eqref{eq:t_in_q}, we obtain the generating function of Kreweras excursions (see \href{https://oeis.org/A006335}{A006335} in the OEIS)
\begin{equation*}
Q(0,0)=1+2t^3+16t^6+192t^9+\dots
\end{equation*}

\subsection{Effect of the Jacobi transformation}
As previously discussed, the Jacobi transformation \eqref{eq:Jacobi_transformation} involves a parameter $\hat{q}$, related to $q$ by $\log(q)\log(\hat{q})=\pi^2$.
Using \eqref{eq:Jacobi_identity} in \eqref{eq:relation_t_q_Kreweras},
we find that 
\begin{equation*}
    t=\frac{\th'(0,\hat{q})}{6\th'(\frac{\pi}{3},\hat{q})}.
\end{equation*}
This in turn can be used to express $\hat{q}$ as a series in $t$ around the critical point $\frac{1}{3}$ (the property that $\hat{q}=0$ actually corresponds to the critical value of $t$ turns out to be a general fact and will be proven in Lemma \ref{lemma:q_values}, see also the beginning of Section~\ref{subsec:effect_Jacobi}), which starts 
\begin{equation}
    \label{eq:qhat_in_t}
    \hat{q}=\frac{1}{\sqrt{3}}\left(\frac{1}{3}-t\right)^{1/2}-\frac{1}{\sqrt{3}}\left(\frac{1}{3}-t\right)^{3/2}+\frac{\sqrt{3}}{2}\left(\frac{1}{3}-t\right)^{5/2}-\frac{70}{27\sqrt{3}}\left(\frac{1}{3}-t\right)^{7/2}+\dots
\end{equation}
Next, we use the Jacobi identity \eqref{eq:Jacobi_identity} in order to rewrite \eqref{eq:Q(0,0)_formula} in terms of $\hat{q}$, which, after some simplifications, yields 
\begin{multline}\label{eq:q00_critical_formula}
    Q(0,0)=\frac{\th\left(\frac{\pi}{3},\hat{q}\right)}{\th'(0,\hat{q})^2}\Bigg[-\frac{9\th\left(\frac{2\pi}{3},\hat{q}\right)}{2}-\frac{\th''\left(\frac{\pi}{3},\hat{q}\right)}{2}+\frac{\th\left(\frac{\pi}{3},\hat{q}\right)\th'''\left(0,\hat{q}\right)}{6\th'(0,\hat{q})}-\frac{4\th'\left(\frac{\pi}{3},\hat{q}\right)}{\th\left(\frac{\pi}{3},\hat{q}\right)}\\-\frac{3\th\left(\frac{2}{3},\hat{q}\right)\th'''\left(0,\hat{q}^3\right)}{2\th'\left(0,\hat{q}^3\right)}+\frac{9\th\left(\frac{2\pi}{3},\hat{q}\right)}{\th\left(\frac{3i}{2}\log\hat{q},\hat{q}^3\right)}\left(\frac{1}{2}\th''\left(\frac{3i}{2}\log\hat{q},\hat{q}^3\right)-i\th'\left(\frac{3i}{2}\log\hat{q},\hat{q}^3\right)\right)\Bigg].
\end{multline}
Note in particular that the terms of the form $\th^{(k)}\left(\frac{3i}{2}\log\hat{q},\hat{q}^3\right)$ are not quite as unwieldy as they appear, since the dependency of $\th$ (or its derivatives) on the first component is essentially given by trigonometric functions, and we have for instance \begin{equation*}
    \sin\left(\frac{3i}{2}\log\hat{q}\right)
    =\frac{1}{2i}\left(\hat{q}^{-3/2}-\hat{q}^{3/2}\right).
\end{equation*}
That a simplification of this form works is to be expected, seeing as we already know this model to be algebraic, thus all logarithms must vanish \cite{FlHa-84,BM-02,BM-05,BMMi-10}. In terms of the computation, this relies heavily on the relation $\gamma=\pi\frac{\tau}{3}$ in the parametrisation \eqref{eq:parametrisation_Kreweras}. For the model we study in Section~\ref{sec:a-model}, there is no similar relation between $\gamma$ and $\tau$, and thus there is no intuitive reason why the logarithms should disappear. 

\subsection{Series expansion around the critical point}

Lastly, all we need to do is substitute \eqref{eq:qhat_in_t} into \eqref{eq:q00_critical_formula} in order to find a local expansion at the critical point $t=\frac{1}{3}$, starting \begin{equation*}
    Q(0,0)=\frac{9}{8}-\frac{27}{8}\left(\frac{1}{3}-t\right)-9\sqrt{3}\left(\frac{1}{3}-t\right)^{3/2}-\frac{189}{4} \left(\frac{1}{3}-t\right)^2+72\sqrt{3}\left(\frac{1}{3}-t\right)^{5/2}\pm\dots
\end{equation*}
One can easily verify that this coincides with the explicit expression for $Q(0,0)$ given in \cite{FlHa-84,BM-02,BM-05,BMMi-10}.

\section{An infinite group model}
\label{sec:a-model}

\noindent We follow the exact same exposition as in Section~\ref{sec:Kreweras}. 
\subsection{Definition of the model}\label{sec:def_a-model}

We consider the problem of quadrant walks with $\sf N$, $\sf E$, $\sf S$, $\sf W$ and $\sf NE$ steps with a weight $a>0$ for each $\sf NE$ step, see Figure~\ref{fig:step_sets}. The generating function for this model was shown to be D-algebraic by Bernardi, Bousquet-M\'elou and Raschel \cite{BeBMRa-21}. 
As in Section \ref{sec:Kreweras}, we start with the functional equation \eqref{eq:functional_eq}, which takes the form
\begin{equation*}
   \frac{K(x,y)}{xyt} Q(x,y)=R(x,y),
\end{equation*}
where $K(x,y)$ is the kernel as in \eqref{eq:def_kernel}, and the term
\begin{equation*}
   R(x,y)=\frac{1}{t}-\frac{1}{x}Q(0,y)-\frac{1}{y}Q(x,0)
\end{equation*}
is called the remainder. The benefit of writing the equation in this form is that we know that if $K(x,y)=0$ then we also have $R(x,y)=0$, as long as the series all converge (alternatively one could write $x$ as a formal power series of $y$ satisfying this equation, but we choose to take the analytic approach in this work). We note that for $t$ sufficiently small (i.e., $\vert t\vert<1/( a+4)$), the series $Q(x,y)$ converges in the domain where $\vert x\vert,\vert y\vert<1$.

\subsection{Parametrisation of the zero-set of the kernel}
In this subsection, we deduce a parametrisation of the kernel curve using results from \cite{EP-22b} (which follows \cite{FaIa-79,FaIaMa-17,Ra-12,DrRa-19}) for general weighted step-sets, then specialising these results to our step-set. We consider the curve $\mathcal C=\{(x,y)\in\overline{\C}\times\overline{\C}:K(x,y)=0\}$ as in \eqref{eq:def_curve}. Under the assumption that our step-set is non-singular and that 
\begin{equation}
    \label{eq:as_tt}
    0<t<\frac{1}{S(1,1)}=\frac{1}{a+4},
\end{equation}
we have the following lemmas (see \cite[Lem.~2.3]{EP-22b}):
\begin{lem}\label{lem:param}
There are meromorphic functions $X,Y:\mathbb{C}\to\mathbb{C}\cup\{\infty\}$ which parametrise $\mathcal C$, that is
\begin{equation*}
   \mathcal C=\{(X(z),Y(z)):z\in\mathbb{C}\},
\end{equation*}
and numbers $\gamma,\tau\in i\mathbb{R}$ with $\Im(\pi\tau)>\Im(2\gamma)>0$ satisfying the following conditions:
\begin{itemize}
\item $K(X(z),Y(z))=0$;
\item $X(z)=X(z+\pi)=X(z+\pi\tau)=X(-\gamma-z)$;
\item $Y(z)=Y(z+\pi)=Y(z+\pi\tau)=Y(\gamma-z)$;
\item $\vert X(-\frac{\gamma}{2})\vert ,\vert Y(\frac{\gamma}{2})\vert <1$;
\item Counting with multiplicity, the functions $X(z)$ and $Y(z)$ each contain two poles and two roots in each fundamental domain $\{z_{c}+r_{1}\pi+r_{2}\pi\tau:r_{1},r_{2}\in[0,1)\}$.
\end{itemize}
\end{lem}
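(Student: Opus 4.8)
The plan is to recover Lemma~\ref{lem:param} from the classical uniformization of the kernel curve going back to Fayolle, Iasnogorodski and Malyshev, in the form developed in \cite{EP-22b}; I indicate the steps and the places where the hypotheses \eqref{eq:as_tt} and non-singularity enter. First I would regard $K(x,y)$ from \eqref{eq:def_kernel} as a quadratic in $y$ and, after clearing denominators, write it as $\tilde a(x)y^{2}+\tilde b(x)y+\tilde c(x)$ with $\tilde a,\tilde b,\tilde c$ polynomials in $x$, and form the discriminant $\delta(x)=\tilde b(x)^{2}-4\tilde a(x)\tilde c(x)$, which here is of degree $4$ in $x$ (symmetrically one gets $\tilde\delta(y)$ of degree $4$ in $y$). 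The first, quantitative, task is to show that for every $t$ in the range \eqref{eq:as_tt} the polynomial $\delta$ has four \emph{distinct} roots, with the two branch points responsible for the fixed points of the ``vertical'' involution lying one inside and one outside the unit circle; this guarantees that $\mathcal C$ from \eqref{eq:def_curve}, viewed as the Riemann surface of the algebraic function $y=y(x)$, is a smooth curve of genus $1$, namely a two-sheeted cover of $\overline{\C}$ branched over the four roots of $\delta$, and symmetrically for $x=x(y)$.

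Second, I would invoke the uniformization theorem for genus-$1$ curves: there exist a lattice $\Lambda=\pi\mathbb Z+\pi\tau\mathbb Z$ and meromorphic $\Lambda$-periodic functions $X,Y\colon\mathbb C\to\mathbb C\cup\{\infty\}$ for which $z\mapsto(X(z),Y(z))$ is a biholomorphism $\mathbb C/\Lambda\to\mathcal C$. As all the model's data are real one normalises $\tau\in i\mathbb R$; one classical construction builds $X$ from the Weierstrass $\wp$-function attached to the four branch points of $\delta$, and $Y$ is then determined rationally from $X$ along the curve. Since the projection $x\colon\mathcal C\to\overline{\C}$ has degree $2$, the function $X$ assumes every value, in particular the value $\infty$, exactly twice per fundamental parallelogram, which yields the ``two poles and two roots'' count, and likewise for $Y$; the periodicity $X(z)=X(z+\pi)=X(z+\pi\tau)$ is automatic.

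Third, I would transport to the $z$-plane the two natural involutions of $\mathcal C$: the Galois involution $\xi$ fixing $x$ and interchanging the two $y$-roots of $K(x,\cdot)=0$, and its mirror $\eta$ fixing $y$. Each lifts to an affine involution $z\mapsto c-z$ of $\mathbb C/\Lambda$, and after a single translation of the uniformising variable one may arrange $\xi\colon z\mapsto-\gamma-z$ and $\eta\colon z\mapsto\gamma-z$ for a constant $\gamma$; this is exactly what produces the symmetries $X(z)=X(-\gamma-z)$ and $Y(z)=Y(\gamma-z)$ on top of $\Lambda$-periodicity, while $\eta\circ\xi$ becomes the translation $z\mapsto z+2\gamma$ (the infinite group of the model corresponds to $2\gamma$ being incommensurable with $\Lambda$, but this plays no role in the present statement).

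Finally one must pin down normalisations. Choosing the fundamental parallelogram and the sign of $\gamma$ so that $0<\Im(2\gamma)<\Im(\pi\tau)$ is a genuine inequality: it is transparent when $t$ is small, where $\gamma\to0$ while $\tau$ stays bounded away from the degenerate values, and it then extends over the whole interval \eqref{eq:as_tt} by continuity, using that $\delta$ never acquires a multiple root there. The bounds $|X(-\tfrac{\gamma}{2})|<1$ and $|Y(\tfrac{\gamma}{2})|<1$ follow from the fact that $-\tfrac{\gamma}{2}$ and $\tfrac{\gamma}{2}$ are the fixed points of $\xi$ and $\eta$ in the relevant half of the parallelogram, so $X(-\tfrac{\gamma}{2})$ is precisely the branch point of $\delta$ singled out in the first step, which was arranged to lie inside the unit disk; this last point reduces to evaluating $\delta$ at $x=\pm1$ and using \eqref{eq:as_tt}. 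The main obstacle is not the uniformization, which is standard, but this quantitative bookkeeping --- keeping the four branch points distinct for all admissible $t$ so the genus stays $1$, orienting $\gamma$ and $\tau$ for the chamber inequality, and locating a branch point relative to the unit circle --- all of which is carried out in \cite[Lem.~2.3]{EP-22b}, whose argument specialises to the present step polynomial $S(x,y)=axy+x+y+\tfrac1x+\tfrac1y$.
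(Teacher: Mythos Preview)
The paper does not give its own proof of Lemma~\ref{lem:param}: it is stated verbatim as a citation of \cite[Lem.~2.3]{EP-22b} (``we have the following lemmas (see \cite[Lem.~2.3]{EP-22b})''), with no argument supplied. Your proposal correctly sketches the classical uniformization argument behind that reference and ends by deferring to the same citation, so you are in complete agreement with the paper's approach---indeed you give \emph{more} detail than the paper does.
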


\begin{figure}[ht]
\centering
   \includegraphics[scale=0.5]{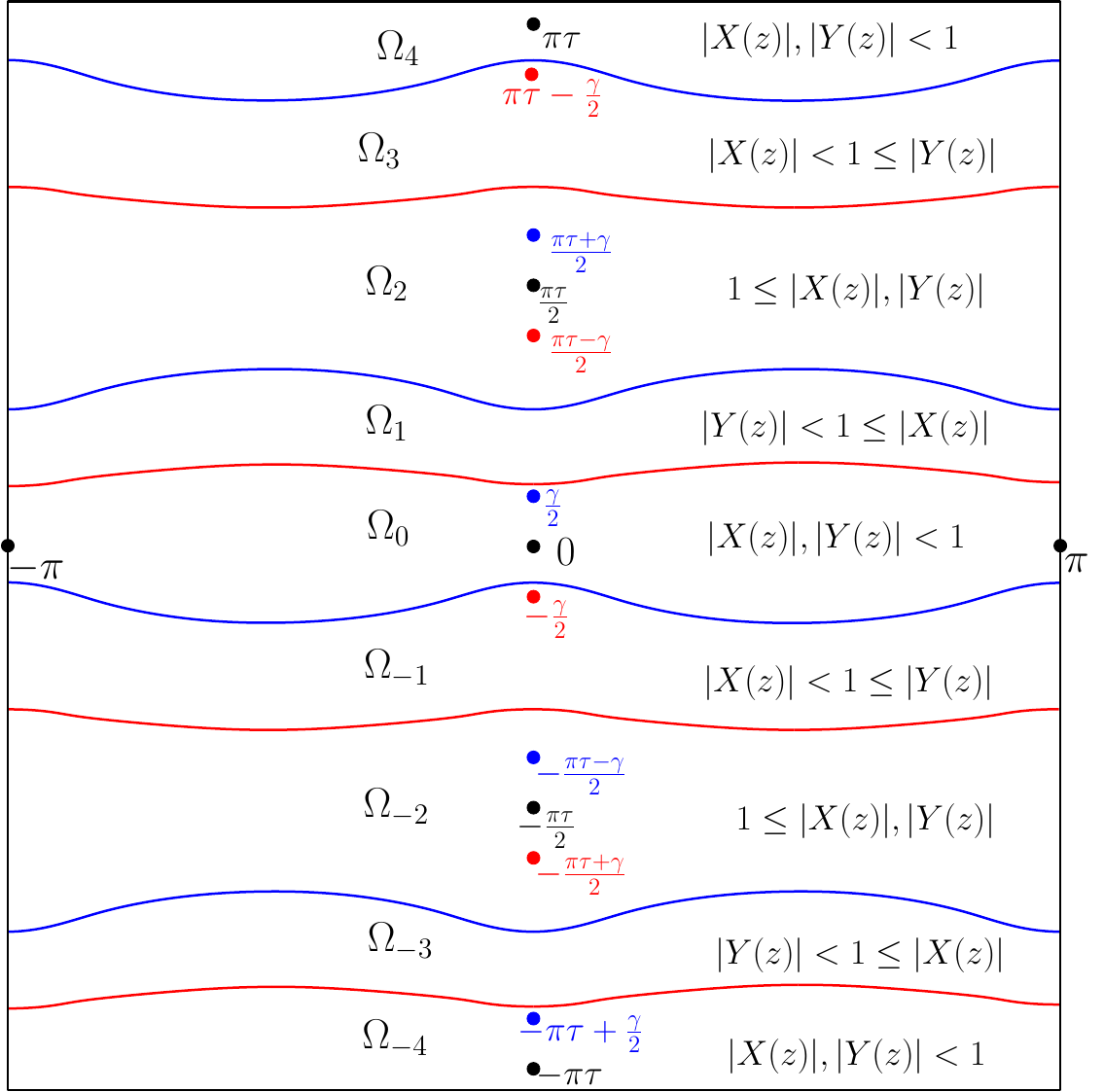}
   \caption{The complex plane partitioned into regions $\Omega_{j}$. For $z$ on the blue lines, $\vert Y(z)\vert=1$, while on the red lines $\vert X(z)\vert=1$.}
   \label{fig:Omega_chart}
\end{figure}
\noindent The following is \cite[Lem.~2.5]{EP-22b}, under the assumption~\eqref{eq:as_tt}.
\begin{lem}\label{lem:Omega}
The complex plane can be partitioned into simply connected regions $\{\Omega_{s}\}_{s\in\mathbb{Z}}$ as in Figure~\ref{fig:Omega_chart}, satisfying
\begin{align*}
\bigcup_{s\in\mathbb{Z}}\Omega_{4s}\cup\Omega_{4s+1}&=\{z\in\mathbb{C}:\vert Y(z)\vert <1\},\\
\bigcup_{s\in\mathbb{Z}}\Omega_{4s-2}\cup\Omega_{4s-1}&=\{z\in\mathbb{C}:\vert Y(z)\vert \geq1\},\\
\bigcup_{s\in\mathbb{Z}}\Omega_{4s-1}\cup\Omega_{4s}&=\{z\in\mathbb{C}:\vert X(z)\vert <1\},\\
\bigcup_{s\in\mathbb{Z}}\Omega_{4s+1}\cup\Omega_{4s+2}&=\{z\in\mathbb{C}:\vert X(z)\vert \geq1\}.
\end{align*}
Moreover, the equations
\begin{align*}
\pi+\Omega_{s}&=\Omega_{s},\\
s\pi\tau+\gamma-\Omega_{2s}\cup\Omega_{2s+1}&=\Omega_{2s}\cup\Omega_{2s+1}\supset \frac{s\pi\tau+\gamma}{2}+\mathbb{R},\\
s\pi\tau-\gamma-\Omega_{2s}\cup\Omega_{2s-1}&=\Omega_{2s}\cup\Omega_{2s-1}\supset\frac{s\pi\tau-\gamma}{2}+\mathbb{R},
\end{align*}
hold for each $s\in\mathbb{Z}$.
\end{lem}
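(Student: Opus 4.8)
The plan is to pass to the torus $\mathbb{T}=\mathbb{C}/(\pi\mathbb{Z}+\pi\tau\mathbb{Z})$, on which, by Lemma~\ref{lem:param}, $X$ and $Y$ descend to meromorphic functions of degree~$2$. Everything then reduces to understanding the two level curves $\Gamma_X=\{z\in\mathbb{T}:\vert X(z)\vert=1\}$ and $\Gamma_Y=\{z\in\mathbb{T}:\vert Y(z)\vert=1\}$ and showing that together they cut $\mathbb{T}$ into four open cells realising the four sign patterns of $(\vert X\vert\lessgtr1,\vert Y\vert\lessgtr1)$. The first and cleanest point is disjointness, $\Gamma_X\cap\Gamma_Y=\emptyset$: if $\vert X(z)\vert=\vert Y(z)\vert=1$, then $(x,y):=(X(z),Y(z))$ lies on the curve, so $0=K(x,y)=xy\bigl(1-tS(x,y)\bigr)$ with $xy\neq0$, forcing $tS(x,y)=1$; but $\vert S(x,y)\vert\le S(1,1)=a+4<1/t$ by~\eqref{eq:as_tt}, a contradiction. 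The same bound keeps the zeros and poles of $X$ and $Y$ (the points $0,\infty$ on the curve) off the unit circle, and one checks from the range~\eqref{eq:as_tt} that no branch value of $X$ or $Y$ lies on $\{\vert\cdot\vert=1\}$ either, so $\Gamma_X$ and $\Gamma_Y$ are smooth, disjoint, compact $1$-manifolds.

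Next I would analyse $\Gamma_X$ on its own, and $\Gamma_Y$ symmetrically. Since $X$ has degree~$2$ and satisfies $X(z)=X(-\gamma-z)$, it realises $\mathbb{T}$ as the double cover of $\overline{\mathbb{C}}$ branched over the four values of $X$ at the fixed points $-\tfrac{\gamma}{2}+\tfrac12(\pi\mathbb{Z}+\pi\tau\mathbb{Z})$ of that involution; these are the four roots of the $y$-discriminant of the kernel, which for $t$ as in~\eqref{eq:as_tt} are real. Thus $\Gamma_X=X^{-1}(\{\vert x\vert=1\})$ is the preimage of a round circle, and its topology is governed by how many of the four branch values lie in the unit disk. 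Here lies the one genuinely model-specific step: using $\vert X(-\tfrac{\gamma}{2})\vert<1$ together with a direct inspection of the discriminant in the regime~\eqref{eq:as_tt}, one checks that the two branch values on the lower real axis of the parametrisation are inside the unit disk and the other two outside, so exactly two branch values lie in $\{\vert x\vert<1\}$. Since a double cover of a disk branched at two points is an annulus, $\{\vert X\vert<1\}$ and $\{\vert X\vert>1\}$ are each annuli in $\mathbb{T}$ meeting along their common boundary $\Gamma_X$; hence $\Gamma_X$ consists of two disjoint essential simple closed curves, and the location of the zeros and poles of $X$ makes them horizontal, i.e.\ of class $[\pi]$, matching Figure~\ref{fig:Omega_chart}. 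The same argument, with $Y(z)=Y(\gamma-z)$, $\vert Y(\tfrac{\gamma}{2})\vert<1$ and the $x$-discriminant, handles $\Gamma_Y$. Now $\Gamma_X\cup\Gamma_Y$ is four pairwise disjoint parallel essential circles, so $\mathbb{T}\setminus(\Gamma_X\cup\Gamma_Y)$ is four open annuli; a vertical period-transversal meets $\Gamma_X$ twice and $\Gamma_Y$ twice, and the only cyclic order of these four crossings compatible with $(\vert X\vert-1,\vert Y\vert-1)$ returning to its starting sign pair after a full period is the alternating one $\Gamma_X,\Gamma_Y,\Gamma_X,\Gamma_Y$ (the pattern $\Gamma_X,\Gamma_X,\Gamma_Y,\Gamma_Y$ would make a cell repeat after only three crossings). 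Therefore the four cells carry the four sign patterns in exactly the cyclic order of the statement, and lifting to $\mathbb{C}$ produces the bi-infinite family $\{\Omega_s\}_{s\in\mathbb{Z}}$ of simply connected horizontal strips, invariant under $z\mapsto z+\pi$, with the four displayed unions describing the regions $\{\vert X\vert\lessgtr1\}$ and $\{\vert Y\vert\lessgtr1\}$.

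Finally, the reflection symmetries. The rotation $z\mapsto s\pi\tau+\gamma-z$ satisfies $Y(s\pi\tau+\gamma-z)=Y(\gamma-z)=Y(z)$, hence preserves $\vert Y\vert$, permutes the components of $\Gamma_Y$, and so permutes the closed strips bounded by two consecutive $\Gamma_Y$-curves, namely the sets $\Omega_{2s}\cup\Omega_{2s+1}$; the one containing the rotation centre $\tfrac{s\pi\tau+\gamma}{2}$ is sent to itself, which is the claimed identity, and since $Y$ is real on the fixed line $\tfrac{s\pi\tau+\gamma}{2}+\mathbb{R}$ and omits the values $\pm1$ there (the real points of the curve with $y=\pm1$, i.e.\ the solutions of $K(x,\pm1)=0$, lie on a different real oval), that whole line stays in a single component of $\{\vert Y\vert\neq1\}$, i.e.\ in $\Omega_{2s}\cup\Omega_{2s+1}$. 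The identity for $z\mapsto s\pi\tau-\gamma-z$ follows identically, now from $X(s\pi\tau-\gamma-z)=X(-\gamma-z)=X(z)$ and $\vert X(-\tfrac{\gamma}{2})\vert<1$. The main obstacle is exactly the model-specific input isolated above — that the unit circle separates the four branch values of $X$ (and of $Y$) into two and two, equivalently that $\{\vert X\vert<1\}$ and $\{\vert Y\vert<1\}$ are horizontal annuli rather than pairs of topological disks, and that $X,Y$ avoid $\pm1$ on the relevant real axes; once this is granted the rest is soft surface topology.
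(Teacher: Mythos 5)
First, a point of comparison: the paper does not prove this statement at all --- Lemma~\ref{lem:Omega} is imported verbatim as \cite[Lem.~2.5]{EP-22b} --- so there is no in-paper argument to measure yours against, and what follows is an assessment of your sketch on its own terms. Its skeleton is the natural one, and the disjointness step is correct and clean: on $\Gamma_X\cap\Gamma_Y$ one would have $tS(X(z),Y(z))=1$ while $\vert S(x,y)\vert\le S(1,1)$ whenever $\vert x\vert=\vert y\vert=1$, by positivity of the weights, contradicting \eqref{eq:as_tt}.

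However, three load-bearing steps are asserted rather than proved, and one of them is argued incorrectly. (a) That exactly two of the four branch values of $X$ (the roots of the $y$-discriminant of the kernel) lie strictly inside the unit disk and two strictly outside is the real content here --- it is precisely where non-singularity and \eqref{eq:as_tt} enter --- and your ``direct inspection of the discriminant'' is doing all of that work; this needs to be proved or cited (\cite[Sec.~2.3]{FaIaMa-17}, \cite{KuRa-12}). (b) The claim that the two components of $\Gamma_X$ are in the homotopy class of the period $\pi$ is justified by ``the location of the zeros and poles of $X$'', but those locations are only pinned down later, in Lemma~\ref{lem:unif_a-model}, whose proof uses the present lemma; the argument should instead run through the branch points, i.e.\ the fixed points $-\frac{\gamma}{2}+\frac{1}{2}(\pi\mathbb{Z}+\pi\tau\mathbb{Z})$ of $z\mapsto-\gamma-z$: the two with interior branch values lie on the single horizontal line $\Im(z)=-\Im(\gamma)/2$, which forces the annulus $\{\vert X\vert<1\}$, hence its boundary, to be horizontal. (c) The alternation argument fails as stated: crossing $\Gamma_X$ twice and $\Gamma_Y$ twice returns the sign pair $(\vert X\vert\lessgtr 1,\vert Y\vert\lessgtr 1)$ to its starting value for \emph{every} cyclic order, and in the order $\Gamma_X,\Gamma_X,\Gamma_Y,\Gamma_Y$ no cell repeats before the fourth crossing (two distinct cells merely share a sign pattern), so nothing is contradicted. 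The interleaving must come from geometry: the marked horizontal circles at heights $-\Im(\gamma)/2$, $+\Im(\gamma)/2$, $-\Im(\gamma)/2+\Im(\pi\tau)/2$, $+\Im(\gamma)/2+\Im(\pi\tau)/2$ carry, respectively, the interior branch points of $X$, those of $Y$, the exterior ones of $X$ and the exterior ones of $Y$, and the inequality $0<\Im(2\gamma)<\Im(\pi\tau)$ then forces the four boundary circles to alternate as in Figure~\ref{fig:Omega_chart}. Relatedly, the reality of $Y$ on $\frac{s\pi\tau+\gamma}{2}+\mathbb{R}$ and the ``different real oval'' remark are unsubstantiated; the standard argument is that $Y$ maps that line two-to-one onto the real segment joining its two interior branch values, a segment contained in $(-1,1)$.
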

\noindent The third result that we will need involves the Jacobi theta function as in \eqref{eq:def_theta1} and is~\cite[Prop.~2.6]{EP-22b}. 
\begin{prop}\label{prop:th_param}
There are some $\alpha\in\Omega_{0}\cup\Omega_{-1}$, $\beta\in\Omega_{0}\cup\Omega_{1}$, $\delta\in\Omega_{1}\cup\Omega_{2}$, $\epsilon\in\Omega_{-2}\cup\Omega_{-1}$ and $x_{c},y_{c}\in\mathbb{C}\setminus\{0\}$ satisfying
\begin{equation*}
\left\{\begin{array}{rcl}
X(z)&=&\displaystyle x_{c}\frac{\th(z-\alpha|\tau)\th(z+\gamma+\alpha|\tau)}{\th(z-\delta|\tau)\th(z+\gamma+\delta|\tau)},\medskip\\
Y(z)&=&\displaystyle y_{c}\frac{\th(z-\beta|\tau)\th(z-\gamma+\beta|\tau)}{\th(z-\epsilon|\tau)\th(z-\gamma+\epsilon|\tau)}.\end{array}\right.
\end{equation*}
\end{prop}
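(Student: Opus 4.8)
The plan is to treat $X$ as an elliptic function for the lattice $\Lambda=\pi\mathbb Z+\pi\tau\mathbb Z$ — this is precisely the content of the period relations $X(z)=X(z+\pi)=X(z+\pi\tau)$ in Lemma~\ref{lem:param} — of degree $2$, since by the last bullet of that lemma it has exactly two zeros and two poles per fundamental domain counted with multiplicity. First I would exploit the reflection symmetry $X(z)=X(-\gamma-z)$ to pin down the divisor of $X$: both its zero set and its pole set are invariant under the involution $z\mapsto-\gamma-z$, and at a fixed point of that involution all odd-order coefficients in the Taylor (or Laurent) expansion of $X$ vanish, so any zero or pole located at a fixed point is automatically of even order. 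As $X$ has only two zeros and two poles, it follows that the zeros form a set $\{\alpha,-\gamma-\alpha\}$ and the poles a set $\{\delta,-\gamma-\delta\}$ for suitable $\alpha,\delta\in\mathbb C$ (a double zero, resp.\ double pole, occurring exactly when $\alpha$, resp.\ $\delta$, is a fixed point of the involution). The structural point to record is that such divisors automatically satisfy the Abel/Liouville constraint: the sum of the zeros and the sum of the poles are both equal to $-\gamma$.

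Next I would compare $X$ with the candidate
\[
   \widetilde X(z)=\frac{\th(z-\alpha\,|\,\tau)\,\th(z+\gamma+\alpha\,|\,\tau)}{\th(z-\delta\,|\,\tau)\,\th(z+\gamma+\delta\,|\,\tau)}.
\]
Using only the elementary relations~\eqref{eq:elementary_property_theta}, one checks that $\widetilde X$ is $\Lambda$-periodic: under $z\mapsto z+\pi$ each of the four theta factors merely changes sign, so $\widetilde X$ is unchanged, while under $z\mapsto z+\pi\tau$ the numerator and the denominator pick up the same quasi-period factor, because that factor depends only on the sum of the two zero locations $\alpha+(-\gamma-\alpha)=-\gamma$, resp.\ the two pole locations $\delta+(-\gamma-\delta)=-\gamma$, and these sums agree by the Abel relation just noted. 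Since $\th(\cdot\,|\,\tau)$ has a simple zero exactly on $\Lambda$, the divisor of $\widetilde X$ coincides with that of $X$, multiplicities included (the fixed-point cases matching by the even-order observation), so $X/\widetilde X$ is an elliptic function without poles, hence a constant $x_c$, with $x_c\neq0$ because $X\not\equiv0$. This yields the asserted formula for $X$; running the same argument for $Y$, now with the reflection $Y(z)=Y(\gamma-z)$, gives the formula for $Y$ with zeros $\{\beta,\gamma-\beta\}$ and poles $\{\epsilon,\gamma-\epsilon\}$.

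It then remains to place $\alpha,\delta,\beta,\epsilon$ in the stated regions. The zeros of $X$ are exactly the points of $\mathcal C$ lying over $x=0$ and the poles the points lying over $x=\infty$, so by Lemma~\ref{lem:Omega} the former lie in $\bigcup_{s}\Omega_{4s-1}\cup\Omega_{4s}$ and the latter in $\bigcup_{s}\Omega_{4s+1}\cup\Omega_{4s+2}$; symmetrically, the zeros and poles of $Y$ lie over $y=0$ and $y=\infty$ and fall in $\bigcup_{s}\Omega_{4s}\cup\Omega_{4s+1}$, resp.\ $\bigcup_{s}\Omega_{4s-2}\cup\Omega_{4s-1}$. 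Since modulo $\Lambda$ each of these four loci is a single band, one may choose the representatives $\alpha$ (a zero of $X$), $\delta$ (a pole of $X$), $\beta$ (a zero of $Y$) and $\epsilon$ (a pole of $Y$) to lie respectively in $\Omega_0\cup\Omega_{-1}$, $\Omega_1\cup\Omega_2$, $\Omega_0\cup\Omega_1$ and $\Omega_{-2}\cup\Omega_{-1}$; the normalisations $\vert X(-\frac{\gamma}{2})\vert<1$, $\vert Y(\frac{\gamma}{2})\vert<1$ of Lemma~\ref{lem:param} together with the reflection and $\pi\tau$-translation relations of Lemma~\ref{lem:Omega} (which for instance send $-\gamma-(\Omega_0\cup\Omega_{-1})$ back into $\Omega_0\cup\Omega_{-1}$, so that the partner zero $-\gamma-\alpha$ stays in the same band) fix these choices consistently. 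I expect this final bookkeeping to be the main obstacle: it hinges on identifying which precise points of the kernel curve lie over $x=0,\infty$ and $y=0,\infty$ and then tracking them through the explicit geometry of the $\Omega$-partition of Figure~\ref{fig:Omega_chart}; by contrast, the existence of the theta-quotient representation itself is a routine application of the classical theory of elliptic functions.
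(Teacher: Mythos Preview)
The paper does not prove Proposition~\ref{prop:th_param}; it is quoted verbatim as \cite[Prop.~2.6]{EP-22b}, so there is no in-paper argument to compare against. Your approach is the standard one and is correct: identify $X$ as a degree-$2$ elliptic function on $\mathbb C/(\pi\mathbb Z+\pi\tau\mathbb Z)$, use the involution $z\mapsto -\gamma-z$ to pair its zeros and poles (the even-order remark at fixed points is exactly the right way to absorb the degenerate case), then match divisors with a theta quotient whose quasi-periodicity factors cancel because the zero-sum and pole-sum both equal $-\gamma$. The analogous argument for $Y$ with $z\mapsto\gamma-z$ is likewise sound.

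On the localisation of $\alpha,\beta,\delta,\epsilon$: your reading of Lemma~\ref{lem:Omega} is right --- zeros of $X$ lie where $|X|<1$, poles where $|X|\geq1$, and similarly for $Y$ --- and the reflection relations there (e.g.\ $-\gamma-(\Omega_0\cup\Omega_{-1})=\Omega_0\cup\Omega_{-1}$) do keep both members of each zero/pole pair in the same band. The one fact you use without stating it is that $\Omega_{s+4}=\Omega_s+\pi\tau$, which is what justifies ``modulo $\Lambda$ each of these four loci is a single band''. This follows by combining the two reflection relations of Lemma~\ref{lem:Omega} for consecutive values of $s$ (the centres of $\Omega_{2s}\cup\Omega_{2s+1}$ sit at $\frac{s\pi\tau+\gamma}{2}$, so four consecutive $\Omega_j$ fill one $\pi\tau$-period), but strictly speaking it is a structural property of the construction in \cite{EP-22b} rather than something the present paper records. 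Your self-assessment that this final bookkeeping is the only real obstacle is accurate.
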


\noindent In the following result, we specialise the above parametrisation to our weighted step-set and thus describe the zero-set of the kernel, meaning the set $\mathcal C$ as introduced in \eqref{eq:def_curve}:
\begin{lem}
\label{lem:unif_a-model}
Given a step-set as defined in Section~\ref{sec:def_a-model}, that is, with counting polynomial $S(x,y)=x+y+\frac{1}{x}+\frac{1}{y}+axy$, and assuming~\eqref{eq:as_tt},
there exists a triple $(c,\gamma,\tau)\in\mathbb{C}^3$ with $\gamma,\tau\in i\mathbb{R}$, $0<\Im(2\gamma)<\Im(\pi\tau)$, satisfying the following equations:
\begin{align}
ac^3&=-\frac{\th(4\gamma|\tau)}{\th(2\gamma|\tau)}\label{eq:ac3},\\
c^2&=\frac{\th(3\gamma|\tau)}{\th(\gamma|\tau)}\label{eq:c2},\\
\frac{c}{t}&=4\frac{\th'(\gamma|\tau)\th(2\gamma|\tau)}{\th'(0|\tau)\th(\gamma|\tau)}-2\frac{\th'(2\gamma|\tau)}{\th'(0|\tau)}.
\label{eq:c_on_t}
\end{align}
With the above notation, the functions $X(z)$ and $Y(z)$ given by
\begin{equation*}
\left\{\begin{array}{rcl}
X(z)&=&\displaystyle c\frac{\th(z|\tau)\th(z+\gamma|\tau)}{\th(z-\gamma|\tau)\th(z+2\gamma|\tau)},\medskip\\
Y(z)&=&\displaystyle c\frac{\th(z|\tau)\th(z-\gamma|\tau)}{\th(z+\gamma|\tau)\th(z-2\gamma|\tau)}\,=\,X(-z),
\end{array}\right.
\end{equation*}
satisfy the conditions of Lemmas~\ref{lem:param} and \ref{lem:Omega}.
\end{lem}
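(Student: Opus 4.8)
The plan is to check that the explicit pair $(X,Y)$ in the statement meets every requirement of Lemmas~\ref{lem:param} and~\ref{lem:Omega}, reading off \eqref{eq:ac3}, \eqref{eq:c2} and \eqref{eq:c_on_t} along the way. I would first record that the displayed $X,Y$ are exactly the functions of Proposition~\ref{prop:th_param} for the parameter values $\alpha=\beta=0$, $\delta=-\epsilon=\gamma$, $x_{c}=y_{c}=c$, and that these values are forced by locating the four points of (the projective closure of) $\mathcal C$ lying over $x\in\{0,\infty\}$ and $y\in\{0,\infty\}$: a direct computation in $\mathbb{C}\times\mathbb{C}$ closed up shows that $x=0$ gives $y\in\{0,\infty\}$, that $x=\infty$ gives $y\in\{0,-1/a\}$, and symmetrically. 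Matching these incidences with the zero/pole divisors of the theta quotients, together with the normalisations of Lemma~\ref{lem:param} and the reflection symmetry $S(x,y)=S(y,x)$ of the step polynomial (which gives $Y(z)=X(-z)$), pins down the whole parametrisation up to the single scalar $c$. The quasi-period identities $X(z)=X(z+\pi)=X(z+\pi\tau)=X(-\gamma-z)$ and their $Y$-analogues, and the count of two zeros and two poles per period parallelogram, then follow immediately from \eqref{eq:elementary_property_theta} and the fact that in the ansatz the zeros $\{0,-\gamma\}$ and the poles $\{\gamma,-2\gamma\}$ of $X$ have equal sum modulo the period lattice.

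The substantive point is the identity $K(X(z),Y(z))\equiv0$, i.e.\ $S(X(z),Y(z))\equiv1/t$. Since each of $X,Y,1/X,1/Y,XY$ is elliptic with periods $\pi,\pi\tau$, so is $\Sigma(z):=S(X(z),Y(z))$, and inspection of the ansatz shows its only possible poles modulo the lattice lie at $z=\pm\gamma$ and $z=\pm2\gamma$. From $X(z)Y(z)=c^{2}\th(z|\tau)^{2}/(\th(z+2\gamma|\tau)\th(z-2\gamma|\tau))$ one gets $X(\gamma)Y(\gamma)=-c^{2}\th(\gamma|\tau)/\th(3\gamma|\tau)$; since $X$ has a simple pole and $Y$ a simple zero at $z=\gamma$, the singular part of $\Sigma$ there — coming only from $X+1/Y$ — cancels exactly when $X(\gamma)Y(\gamma)=-1$, which is \eqref{eq:c2}. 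At $z=2\gamma$, $Y$ has a simple pole and $X(2\gamma)$ is finite, and the singular part (from $Y+aXY$) cancels exactly when $aX(2\gamma)=-1$, which after using \eqref{eq:c2} becomes \eqref{eq:ac3}; the poles at $z=-\gamma$ and $z=-2\gamma$ are cancelled by the same two identities because $Y(z)=X(-z)$. Hence $\Sigma$ is a bounded elliptic function, so constant; letting $z\to0$, where $X,Y,XY\to0$ while $1/X(z)+1/X(-z)$ tends, by a two-term Taylor expansion of the theta quotient, to $\tfrac{2}{c\th'(0|\tau)}(2\th'(\gamma|\tau)\th(2\gamma|\tau)/\th(\gamma|\tau)-\th'(2\gamma|\tau))$, evaluates this constant and yields \eqref{eq:c_on_t}.

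What remains is to exhibit the triple $(c,\gamma,\tau)$ with $\gamma,\tau\in i\R$ and $0<\Im(2\gamma)<\Im(\pi\tau)$, and to verify the positional conditions of Lemma~\ref{lem:param} (notably $|X(-\gamma/2)|,|Y(\gamma/2)|<1$) and the compatibility with the region decomposition of Figure~\ref{fig:Omega_chart} required by Lemma~\ref{lem:Omega}. Eliminating $c$ from \eqref{eq:ac3}–\eqref{eq:c2} shows that $a$ is a function of $(\gamma,\tau)$ alone, namely $a^{2}=\th(4\gamma|\tau)^{2}\th(\gamma|\tau)^{3}/(\th(2\gamma|\tau)^{2}\th(3\gamma|\tau)^{3})$, after which \eqref{eq:c2} and \eqref{eq:ac3} determine $c$ and \eqref{eq:c_on_t} determines $t$. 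Existence then amounts to proving that the map $(\gamma,\tau)\mapsto(a,t)$ sends the wedge $\{0<\Im(2\gamma)<\Im(\pi\tau)\}$ onto the admissible region $\{a>0,\,0<t<1/(a+4)\}$, which I would do by a continuity/degree argument tracking the boundary behaviour: $\Im\tau\to\infty$ (where $q\to0$ and $t\to0$), $\Im\tau\to0$ (the critical regime, $t\to1/(a+4)$), and $\Im(2\gamma)\to0$ or $\Im(2\gamma)\to\Im(\pi\tau)$ (the degenerations $a\to0$ and $a\to\infty$). I expect this transcendental surjectivity statement, together with the check that the level curves $|X(z)|=1$ and $|Y(z)|=1$ partition $\mathbb{C}$ exactly as drawn in Figure~\ref{fig:Omega_chart}, to be the main obstacle; everything else is routine theta-function bookkeeping. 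Alternatively, one can bypass the analytic step: invoke Lemma~\ref{lem:param} and Proposition~\ref{prop:th_param} to produce $\gamma,\tau$ and the general theta parametrisation, and then only perform the parameter identification of the first paragraph, so that \eqref{eq:ac3}–\eqref{eq:c_on_t} hold by construction.
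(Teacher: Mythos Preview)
Your proposal is correct, and the ``alternative'' you sketch in the last sentence is exactly the route the paper takes: it invokes Lemma~\ref{lem:param} and Proposition~\ref{prop:th_param} to obtain $\gamma,\tau$ and a general theta parametrisation satisfying all the structural conditions \emph{a priori}, and then identifies the parameters $\alpha,\beta,\delta,\epsilon,x_c,y_c$ for this particular step-set. Your primary route (direct verification plus a surjectivity argument for $(\gamma,\tau)\mapsto(a,t)$) would work too, but the surjectivity is genuinely harder and the paper avoids it entirely.

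Two differences in detail are worth noting. First, your parameter identification ``by matching incidences'' is looser than what the paper actually does: the paper uses the $\Omega$-region constraints of Proposition~\ref{prop:th_param} together with the symmetry $-\Omega_0=\Omega_0$ (deduced from $X(z)=Y(-z)$) to argue that the common root $\alpha$ of $X$ and $Y$ satisfies $-\alpha\in\alpha+\pi\mathbb Z$, forcing $\alpha\in\{0,\tfrac\pi2\}$, and then handles the $\tfrac\pi2$ case by a shift. You would need some version of this $\Omega$-based argument; merely listing the points over $x,y\in\{0,\infty\}$ does not by itself select the correct preimages on the torus. Second, your derivation of \eqref{eq:ac3}--\eqref{eq:c_on_t} as pole-cancellation conditions for $\Sigma(z)=S(X(z),Y(z))$ is logically inverted relative to the paper: there $K(X(z),Y(z))=0$ is already known from Lemma~\ref{lem:param}, and the three equations are read off by specialising $z$ to $2\gamma,\gamma,0$. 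Your computation is the same calculation presented as a proof rather than a consequence; it is perfectly valid and arguably cleaner, but not needed once one takes the paper's route.
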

\begin{proof} 
From the definition of $X(z)$ and $Y(z)$ given in \cite[App.~A]{EP-22b}, the fact that the step-set is symmetric implies that $X(z)=Y(-z)$. Moreover, $\Omega_{0}\cup\Omega_{1}$ is the connected component of $\{z\in\mathbb{C}:|Y(z)|<1\}$ containing $\frac{\gamma}{2}+\mathbb{R}$, while $\Omega_{0}\cup\Omega_{-1}$ is the connected component of $\{z\in\mathbb{C}:|X(z)|<1\}$ containing $-\frac{\gamma}{2}+\mathbb{R}$. Combining with $X(z)=Y(-z)$, this implies that $-\Omega_{0}\cup\Omega_{1}=\Omega_{-1}\cup\Omega_{0}$. Considering the intersection $\Omega_{0}=\left(\Omega_{0}\cup\Omega_{-1}\right)\cap\left(\Omega_{0}\cup\Omega_{1}\right)$, we see that $-\Omega_{0}=\Omega_{0}$.

Now, by the definition \eqref{eq:def_curve} of $\mathcal{C}$, we have $(0,0),(\infty,0),(0,\infty)\in \mathcal{C}$.

From Lemma~\ref{lem:param}, we know that $X(z)$ and $Y(z)$ each contain two roots and two poles in each fundamental domain. Consider the fundamental domain
\begin{equation*}
   F=\{z\in\Omega_{-1}\cup\Omega_{0}\cup\Omega_{1}\cup\Omega_{2}:\Re(z)\in[0,\pi)\},
\end{equation*}
and let $\alpha\in F$ and $\delta\in F$ be a root and pole of $X(z)$, respectively, which are both roots of $Y(z)$. From Lemma \ref{lem:Omega}, we must have $\alpha\in \Omega_{0}$ and $\delta\in\Omega_{1}$. Since these are distinct, $Y(z)$ has no other roots in $F$, and so the complete set of roots of $Y(z)$ in $\Omega_{0}\cup\Omega_{1}$ is $\alpha+\pi\mathbb{Z}\cup \delta+\pi\mathbb{Z}$. Note that $-\alpha\in\Omega_{0}$ is also a root of $Y(z)$, so we must have $-\alpha\in\alpha+\pi\mathbb{Z}$. In fact, since $\alpha\in F$, we have $\alpha=\frac{\pi}{2}$ or $\alpha=0$. We will start by considering the case where $\alpha=0$. Then $0$ is a root of $Y(z)$, so, since $Y(\gamma-z)=Y(z)$, the value $\gamma\in\Omega_{0}\cup\Omega_{1}$ is also a root of $Y(z)$. Since $\Re(\gamma)=0$, we have $\gamma\in F$, so $\delta=\gamma$. Recall also that $\delta$ is a pole of $X(z)$, so $-\gamma=-\delta$ is a pole of $Y(z)$. This implies that the function $\tilde{Y}(z)$ defined by
\begin{equation*}
   \tilde{Y}(z):=Y(z)\frac{\th(z+\gamma|\tau)\th(z-2\gamma|\tau)}{\th(z|\tau)\th(z-\gamma|\tau)}
\end{equation*}
has at most a single pole in each fundamental domain, at the pole of $Y(z)$ other than $-\gamma$. But $\tilde{Y}(z)$ is an elliptic function with periods $\pi$ and $\pi\tau$, so it cannot have only a single pole in each fundamental domain \cite[p.~8]{Akh-90}. Therefore it must have no poles, and is therefore a constant function.

Now write $\tilde{Y}(z)=c$ where $c\in\mathbb{C}$. Note $c\neq 0$ as $Y(z)$ is not the zero function. Then
\begin{equation*}
   Y(z)=c\frac{\th(z|\tau)\th(z-\gamma|\tau)}{\th(z+\gamma|\tau)\th(z-2\gamma|\tau)}
\end{equation*}
and
\begin{equation*}
   X(z)=Y(-z)=c\frac{\th(z|\tau)\th(z+\gamma|\tau)}{\th(z-\gamma|\tau)\th(z+2\gamma|\tau)}.
\end{equation*}
Equations \eqref{eq:ac3}, \eqref{eq:c2} and \eqref{eq:c_on_t} follow from considering the equation $K(X(z),Y(z))=0$ at $z=2\gamma$, $\gamma$ and $0$.

Finally we will discuss the case $\alpha=\frac{\pi}{2}$ instead of $\alpha=0$. The functions $\hat{X}(z):=X(z+\pi/2)$ and $\hat{Y}(z):=Y(z+\pi/2)$ satisfy $\hat{X}(0)=\hat{Y}(0)$, so we can apply the rest of the proof to $\hat{X}(z)$ and $\hat{Y}(z)$, to show that they have the required properties. By carefully analysing the definition of $X(z)$ and $Y(z)$ in \cite[App.~A]{EP-22b}, one can prove that the $\alpha=\frac{\pi}{2}$ case actually never occurs; this is however unnecessary for the proof.
\end{proof}
\noindent\textbf{Remark:} The fact that $\frac{\th(3\gamma\vert \tau)}{\th(\gamma\vert \tau)}>0>\frac{\th(4\gamma\vert \tau)}{\th(2\gamma\vert \tau)}$ along with $\frac{\gamma}{\pi\tau}\in(0,\frac{1}{2})$ implies that $\frac{\gamma}{\pi\tau}\in(\frac{1}{4},\frac{1}{3})$.
Observe that the above uniformization is very similar to that \eqref{eq:parametrisation_Kreweras} of Kreweras' model. Unlike in the latter case, however, where we had (after rescaling $\tau$) $\alpha=3\tau$, there is no obvious relation between $\gamma$ and $\tau$ here.

\subsection{Explicit expression for the generating function}

As a consequence of the above definitions, we can define holomorphic functions $Q_{1}:\Omega_{-1}\cup\Omega_0\to\mathbb{C}$ and $Q_{2}:\Omega_0\cup\Omega_1\to\mathbb{C}$ by
\begin{equation*}
   Q_{1}(z)=Q(X(z),0)\qquad\text{and}\qquad Q_{2}(z)=Q(0,Y(z)).
\end{equation*}
Moreover, by symmetry, $Q_{1}(z)=Q_{1}(\gamma-z)$ and $Q_{2}(z)=Q_{2}(-\gamma-z)$. 

The next step is to find a function of $X(z)$ which is equal to a function of $Y(z)$ for $z\in\Omega_0$ using the equations $K(X(z),Y(z))=R(X(z),Y(z))=0$. Note that, given \eqref{eq:functional_eq}, this is equivalent to finding a decoupling function in the sense of~\cite[4.2]{BeBMRa-21}.
In this case, this is immediate as combining the two equations yields
\begin{equation*} atX(z)Q(X(z),0)+atY(z)Q(0,Y(z))=aX(z)Y(z)=-X(z)-Y(z)-\frac{1}{X(z)}-\frac{1}{Y(z)}+\frac{1}{t},
\end{equation*}
so we can now define a meromorphic function $J(z)$ on $\Omega_{-1}\cup\Omega_0\cup\Omega_1$ by writing
\begin{equation}
\label{eq:Jinxz}
J(z) := \left\{\begin{array}{ll}
\displaystyle \hspace{6.4mm}\frac{1}{2t}-atX(z)Q_{1}(z)-X(z)-\frac{1}{X(z)}& \text{for $z\in\Omega_{-1}\cup\Omega_0$,}\medskip \\
\displaystyle -\left(\frac{1}{2t}-atY(z)Q_{2}(z)-Y(z)-\frac{1}{Y(z)}\right)&\text{for $z\in\Omega_0\cup\Omega_1$},
\end{array}\right.
\end{equation}
as these expressions are equal on $\Omega_0$. Moreover, $J(z)$ satisfies $J(\gamma-z)=J(z)=J(z+\pi)$ for $z\in\Omega_{-1}\cup\Omega_0$ and $J(-\gamma-z)=J(z)$ for $z\in \Omega_0\cup\Omega_1$, so $J(z-\gamma)=J(z+\gamma)$ for $z\in \Omega_0$. We can use this to extend $J(z)$ to a meromorphic function on $\mathbb{C}$, which satisfies
\begin{equation*}
   J(\gamma-z)=J(z),\quad J(-\gamma-z)=J(z)\quad\text{and}\quad J(z+\pi)=J(z).
\end{equation*}
This implies that $J$ is doubly periodic, with periods $\pi$ and $2\gamma$, which will allow us to determine it exactly. We also note that by symmetry in $x$ and $y$, we have $J(z)+J(-z)=0$. Solving this exactly yields the following result, analogous to Proposition~\ref{prop:expression_Kreweras} for the Kreweras model:
\begin{prop}
\label{prop:expression_Q(x,0)_a-model}
Let $c,\gamma,\tau$ be defined as in Lemma~\ref{lem:unif_a-model}. The function $J$ in \eqref{eq:Jinxz} is given by
\begin{equation*}J(z)=-\frac{\th'(0|\frac{\gamma}{\pi})\th(2\gamma|\tau)\th(z+\frac{\pi}{2}|\frac{\gamma}{\pi})}{c\th(\frac{\pi}{2}|\frac{\gamma}{\pi})\th'(0|\tau)\th(z|\frac{\gamma}{\pi})}.\end{equation*}
\end{prop}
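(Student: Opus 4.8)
The plan is to mimic exactly the proof of Proposition~\ref{prop:expression_Kreweras}: we already know from the discussion preceding the statement that $J$ is an elliptic function with periods $\pi$ and $2\gamma$, that it satisfies the reflection relations $J(\gamma-z)=J(z)$ and $J(-\gamma-z)=J(z)$, and the oddness $J(z)+J(-z)=0$. So the strategy is first to locate the poles of $J$, then to write down the unique (up to a multiplicative constant) elliptic function with those periods having simple poles at exactly those points, check it has the required symmetries, and finally pin down the constant by evaluating at a convenient point. Since $J$ has periods $\pi$ and $2\gamma$, it is natural to rewrite everything in terms of theta functions with nome parameter $\frac{\gamma}{\pi}$ (so that $2\gamma$ plays the role of the quasi-period $\pi\tau'$ with $\tau'=\frac{2\gamma}{\pi}$, while $\pi$ is the real period); this explains the appearance of $\th(\cdot\,|\tfrac{\gamma}{\pi})$ in the claimed formula. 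Note $\tfrac{\gamma}{\pi}\in i\R_+$ by Lemma~\ref{lem:unif_a-model}, so these theta functions are well-defined.

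First I would determine the pole structure of $J$ on $\Omega_{-1}\cup\Omega_0\cup\Omega_1$ from the defining formula \eqref{eq:Jinxz}. On $\Omega_{-1}\cup\Omega_0$ we have $J(z)=\frac{1}{2t}-atX(z)Q_1(z)-X(z)-\frac{1}{X(z)}$; since $Q_1$ is holomorphic there, the only possible poles come from the poles of $X(z)$ and from the zeros of $X(z)$ (through the $\frac{1}{X(z)}$ term). By Lemma~\ref{lem:unif_a-model}, $X(z)=c\,\th(z|\tau)\th(z+\gamma|\tau)/\bigl(\th(z-\gamma|\tau)\th(z+2\gamma|\tau)\bigr)$, so inside a fundamental domain $X$ has its zeros at $z=0$ and $z=-\gamma$ and its (simple) poles at $z=\gamma$ and $z=-2\gamma$. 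However the pole of $X$ at $z=\gamma$ is cancelled in $J$: indeed $atX(z)Q_1(z)$ and $X(z)$ both blow up there but $J$ agrees with the $\Omega_0\cup\Omega_1$ expression which is regular at $\gamma$ (that is precisely the decoupling identity that made $J$ well-defined); one argues similarly that the pole at $z=-2\gamma$ is removed using the other branch. The genuine singularities of $J$ are therefore at the zeros of $X$, i.e.\ at $z=0$ from the $\frac{1}{X(z)}$ term (the $z=-\gamma$ zero of $X$ is, by the symmetry $J(\gamma-z)=J(z)$, equivalent to $z=\gamma$, hence also handled). After translating by the periods $\pi$ and $2\gamma$ and using $J(-z)=-J(z)$, one concludes that $J$ has simple poles exactly on the lattice $\{k\pi+2\ell\gamma : k,\ell\in\Z\}$ — that is, at the points congruent to $0$ modulo the period lattice — and no others. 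I would make this precise by computing the leading term of $J$ near $z=0$: from $X(z)\sim c\,\th'(0|\tau)\,\th(\gamma|\tau)\,z / \bigl(\th(-\gamma|\tau)\th(2\gamma|\tau)\bigr) = -c\,\th'(0|\tau)\,z/\th(2\gamma|\tau)$ (using $\th(-\gamma)=-\th(\gamma)$), we get $-\frac{1}{X(z)}\sim \frac{\th(2\gamma|\tau)}{c\,\th'(0|\tau)}\cdot\frac1z$, which fixes the residue of $J$ at $0$.

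Next I would exhibit the candidate. The function $z\mapsto \th(z+\tfrac{\pi}{2}|\tfrac{\gamma}{\pi})/\th(z|\tfrac{\gamma}{\pi})$ is elliptic with periods $\pi$ and $2\gamma$ (this follows from the quasi-periodicity relations \eqref{eq:elementary_property_theta} applied with $\tau$ replaced by $\tfrac{2\gamma}{\pi}$, noting the shift $\tfrac{\pi}{2}$ makes the quasi-period exponentials cancel in the ratio), has simple poles precisely at $z\in\{k\pi+2\ell\gamma\}$ coming from the zeros of the denominator, and is regular elsewhere. Its residue at $z=0$ is $\th(\tfrac\pi2|\tfrac\gamma\pi)/\th'(0|\tfrac\gamma\pi)$. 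Hence $J(z)$ and the multiple of this ratio with the same residue at $0$ are two elliptic functions for the same lattice with the same poles and the same principal parts; their difference is an entire elliptic function, hence a constant, and since both are odd the constant is $0$. Matching residues gives the scalar $-\frac{\th'(0|\tfrac\gamma\pi)\,\th(2\gamma|\tau)}{c\,\th(\tfrac\pi2|\tfrac\gamma\pi)\,\th'(0|\tau)}$, which is exactly the prefactor in the statement. (One should also check the reflection symmetries $J(\gamma-z)=J(z)$, $J(-\gamma-z)=J(z)$ for the candidate; these are automatic once periodicity, oddness and the single simple pole per cell are known, but can also be verified directly from $\th(\pi-z)=\th(z)$.)

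The main obstacle, and the step deserving the most care, is the bookkeeping of which poles of the raw expression \eqref{eq:Jinxz} actually survive in $J$: one must argue cleanly that the poles of $X$ (and the corresponding poles of $Y$ on the other branch) cancel because the two piecewise definitions agree on $\Omega_0$ and each is holomorphic across the other's singular set, leaving only the $\tfrac1{X}$ and $\tfrac1Y$ contributions, and then that after extending by periodicity and using oddness these collapse to a single orbit of simple poles. Everything else — verifying ellipticity of the candidate with the correct periods, reading off residues, and invoking Liouville's theorem for elliptic functions (as in \cite[p.~8]{Akh-90}) — is routine. A minor subtlety is confirming that $\tfrac{\gamma}{\pi}$ has positive imaginary part so the theta functions $\th(\cdot|\tfrac\gamma\pi)$ converge, but this is immediate from $\gamma\in i\R_+$ in Lemma~\ref{lem:unif_a-model}.
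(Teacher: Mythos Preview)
Your overall strategy is the same as the paper's (and the same as the Kreweras proof): show that the difference between $J$ and the candidate is an elliptic function with no poles, hence a constant, and kill the constant by oddness. But the pole bookkeeping, which you rightly flag as ``the step deserving the most care'', contains a genuine error.

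You claim that $J$ has simple poles exactly on the lattice $\{k\pi+2\ell\gamma:k,\ell\in\Z\}$, i.e.\ one simple pole per fundamental domain. This is impossible: a non-constant elliptic function cannot have a single simple pole in a fundamental domain, since the residues there must sum to zero (this is the very fact from \cite[p.~8]{Akh-90} you invoke later). In reality $J$ has \emph{two} simple poles per cell, at $0$ and at $\gamma$ (equivalently $-\gamma$), with opposite residues. The second pole comes from the zero of $X$ at $z=-\gamma$ via the $-\tfrac{1}{X(z)}$ term (on the $Y$-branch, from the zero of $Y$ at $z=\gamma$). Your sentence ``the $z=-\gamma$ zero of $X$ is, by the symmetry $J(\gamma-z)=J(z)$, equivalent to $z=\gamma$, hence also handled'' conflates ``related by a reflection symmetry'' with ``not a separate pole'': the relation $J(\gamma-z)=J(z)$ forces the residue at $-\gamma$ to equal minus the residue at $0$, but it does not make the pole disappear. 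Correspondingly, the candidate $\th(z+\tfrac{\pi}{2}\,|\,\tfrac{\gamma}{\pi})/\th(z\,|\,\tfrac{\gamma}{\pi})$ also has zeros of its denominator on all of $\pi\Z+\gamma\Z$, hence two poles per cell, not one. (Your claim that the pole of $X$ at $z=\gamma$ is ``cancelled'' is also mis-argued: $\gamma\in\Omega_{1}$, so it simply lies outside the domain $\Omega_{-1}\cup\Omega_{0}$ of the first branch; and the second branch is \emph{not} regular at $\gamma$ either, since $Y(\gamma)=0$.)

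Once the correct pole set is in hand, matching the residue at $z=0$ alone no longer suffices: you must also match at $z=\gamma$. This is exactly what the paper does, and the missing ingredient is the one you mention only parenthetically at the end: the candidate satisfies the same reflection $g(\gamma-z)=g(z)$, so if the difference $I(z)=J(z)-g(z)$ is regular at $0$ it is automatically regular at $\gamma$ as well (since $I(\gamma)=I(0)$). With that observation your argument becomes correct and essentially identical to the paper's.
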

\begin{proof}
Define $I(z)$ by
\begin{equation*}
   I(z):=J(z)+\frac{\th'(0|\frac{\gamma}{\pi})\th(2\gamma|\tau)\th(z+\frac{\pi}{2}|\frac{\gamma}{\pi})}{c\th(\frac{\pi}{2}|\frac{\gamma}{\pi})\th'(0|\tau)\th(z|\frac{\gamma}{\pi})}.
\end{equation*}
Then it suffices to show that $I(z)=0$. Using properties of $\th$ (see \eqref{eq:elementary_property_theta}), we observe that $I(z)$ satisfies the same transformations as $J(z)$, namely $I(z)=-I(-z)=I(\gamma-z)=I(z+\pi)$. Moreover, by \eqref{eq:Jinxz}, the poles of $J(z)$ in $\Omega_{-1}\cup\Omega_0$ occur precisely at the points $\pi n$ and $\gamma+\pi n$ for $n\in\mathbb{Z}$. By the definition, $I(z)$ has no other poles in $\Omega_{-1}\cup\Omega_0$, and taking $z\to 0$, we see that $0$ is not a pole of $I(z)$. Hence, the transformations $I(z)=I(\gamma-z)=I(z+\pi)$ imply that $I(z)$ must be holomorphic on $\Omega_{-1}\cup\Omega_0$. Moreover, since $I(z)=-I(\gamma+z)$, this implies that $I(z)$ is holomorphic on $\mathbb{C}$, so it is a constant function. Finally we find that this constant $k$ is $0$ from $k=I(z)=-I(-z)=-k$. 
\end{proof}
\noindent Proposition~\ref{prop:expression_Q(x,0)_a-model} combined with \eqref{eq:Jinxz} gives an explicit expression for $Q(x,0)$, starting from which we can extract the exact form of $Q(0,0)$. For convenience we will write $\th(z)=\th(z|\tau)$ and $\thh(z)=\th(z|\frac{\gamma}{\pi})$.
Analysing $J(z)$ as $z\to0$ yields the equation
\begin{multline*}
1+atQ(0,0)=\\\frac{\th(2\gamma)^2}{c^2\th'(0)^2}\left(
\frac{1}{2}\frac{\th'(\gamma)\th'(2\gamma)}{\th(\gamma)\th(2\gamma)}-2\frac{\th'(\gamma)^2}{\th(\gamma)^2}-\frac{1}{2}\frac{\th''(2\gamma)}{\th(2\gamma)}+\frac{1}{6}\frac{\th'''(0)}{\th'(0)}+\frac{1}{2}\frac{\thh''(\frac{\pi}{2})}{\thh(\frac{\pi}{2})}-\frac{1}{6}\frac{\thh'''(0)}{\thh'(0)}
\right).
\end{multline*}


We now describe how the coefficients of the series $Q(0,0)$ can be extracted from the solution above. Writing $s=e^{i\gamma}$, the right-hand side of the equation
\begin{equation}
\label{eq:relation_a_q}
a^{2}=\frac{\th(\gamma|\tau)^3\th(4\gamma|\tau)^2}{\th(2\gamma|\tau)^2\th(3\gamma|\tau)^3}
\end{equation}
is a series in $q$ and $s$, while the left-hand side is constant. We can use this to write $q$ as a series in $s$, which starts 
\begin{equation*}
q=a^{1/2}s^{7/2}+\left(\frac{1}{2\sqrt{a}}-\frac{3}{4}a^{3/2}\right)s^{9/2}+\mathcal O(s^{11/2}).
\end{equation*}
We can then write $t$ (and $c$) as series in $s$ using \eqref{eq:ac3}, \eqref{eq:c2} and \eqref{eq:c_on_t}.
Consequently our expression for $Q(0,0)$ can be expanded as a series in $s$ and therefore $t$.

\subsection{Effect of the Jacobi transformation}
\label{subsec:effect_Jacobi}

Before analysing the Jacobi transformation \eqref{eq:Jacobi_transformation}, let us define the critical point of the model. As shown in \cite{FaRa-10,FaRa-12,BoRaSa-14,DeWa-15}, the critical point $t_c>0$ may be defined as the smallest positive singularity of the series $Q(0,0)$. It can be further characterized as the exponential growth of the coefficients of $Q(0,0)$, meaning that (up to a polynomial correction) $[t^n]Q(0,0)\sim 1/t_c^n$ (see \eqref{eq:asymptotics_Qx0} for a more precise statement). Finally, it is also the smallest value of $t>0$ such that the Riemann surface $\mathcal C$ has genus $0$.

What is essential for applying the Jacobi transformation is the fact that the critical point corresponds to $q=1$ (and consequently $\hat{q}=0$, whereas $\hat{q}=1$, or $q=0$, is equivalent to the regime $t=0$). Additionally, we will also need to know that the only singularity of $Q(0,0)$ lies on the positive real axis. These two facts will be shown in the following Lemmas~\ref{lemma:q_values} and~\ref{lem:real_sing}, which are true for more general models than considered here. They will be proven in Appendix~\ref{app:lemma:q_values}. 
\begin{lem}
\label{lemma:q_values}
Writing $q$ as a function of $t$, we have \begin{enumerate}
    \item $0<q(t)<1$ for $0<t<t_c$,
    \item $q(t)$ as a function of $t$ is continuous on $(0,t_c)$,
    \item $\lim_{t\to 0}q(t)=0$,
    \item $\lim_{t\to t_c}q(t)=1$.
\end{enumerate}
\end{lem}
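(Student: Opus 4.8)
The plan is to exploit the defining relations of the uniformization — in the $a$-model these are equations \eqref{eq:ac3}, \eqref{eq:c2}, \eqref{eq:c_on_t} together with \eqref{eq:relation_a_q} — to view $q$ as a function of $t$ via the intermediate parameters $\gamma,\tau$ (equivalently $q=e^{i\pi\tau}$ and $s=e^{i\gamma}$), and then to track the monotone behaviour at the two endpoints $t\to 0$ and $t\to t_c$. First I would record that, by Lemma~\ref{lem:unif_a-model}, for $t\in(0,t_c)$ one has $\gamma,\tau\in i\mathbb{R}$ with $0<\Im(2\gamma)<\Im(\pi\tau)$, so that $q=e^{i\pi\tau}\in(0,1)$ and $s=e^{i\gamma}\in(0,1)$ automatically; this is exactly item~(i), granted that the triple $(c,\gamma,\tau)$ is well defined throughout $(0,t_c)$. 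The well-definedness and the continuity in item~(ii) should follow from an implicit-function-theorem argument: the map $(\gamma,\tau)\mapsto(a,t)$ given by \eqref{eq:relation_a_q} and \eqref{eq:c_on_t} (after eliminating $c$ via \eqref{eq:c2}) is real-analytic on the relevant open region of $(i\mathbb{R})^2$, and one needs to check its Jacobian is nonvanishing so that the inverse branch $t\mapsto(\gamma(t),\tau(t))$, hence $t\mapsto q(t)$, is continuous (indeed real-analytic) on $(0,t_c)$; the only place this can fail is at $t=t_c$, which is precisely where the surface degenerates to genus $0$.

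For item~(iii), $\lim_{t\to 0}q(t)=0$: here I would argue that as $t\to 0$ the kernel curve $\{K(x,y)=0\}$ degenerates — the branch points of $X$ (the roots of the discriminant in $x$ of $K(x,y)$) collide in pairs — which forces one of the two periods of the elliptic curve to blow up relative to the other, i.e. $\Im(\pi\tau)\to+\infty$, hence $q\to 0$. Concretely, one can read this directly off the series expansions already computed in the excerpt: in the $a$-model, $q=a^{1/2}s^{7/2}+O(s^{9/2})$ and $s=e^{i\gamma}\to 0$ as $t\to 0$ (since $X(-\gamma/2),Y(\gamma/2)$ must stay $<1$ and the relevant lengths shrink), so $q\to 0$; equivalently one invokes \eqref{eq:t_in_q}-type expansions showing $q$ is a fractional power series in $t$ vanishing at $t=0$. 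A clean model-independent way to phrase it: $\hat q=1$ corresponds to $q=0$, and the classical fact that the Taylor expansion of $Q$ at $t=0$ is the ``$q=0$'' regime (stated in the introduction) pins down $\lim_{t\to0}q(t)=0$.

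For item~(iv), $\lim_{t\to t_c}q(t)=1$: the cleanest route is via the characterization of $t_c$ recalled just before the lemma — $t_c$ is the smallest $t>0$ at which $\mathcal{C}$ has genus $0$ — and the fact that the genus drops exactly when the lattice of periods degenerates with $\tau\to 0$, i.e. $\Im(\pi\tau)\to 0^+$, hence $q=e^{i\pi\tau}\to 1$. To make this rigorous without circularity I would: (a) show $q(t)$ is monotone increasing on $(0,t_c)$ (from the sign of the derivative in the implicit-function computation, or from an explicit monotonicity of $t\mapsto\tau$ using \eqref{eq:c_on_t}); (b) conclude $\lim_{t\to t_c^-}q(t)=:q_c$ exists in $(0,1]$; (c) rule out $q_c<1$ by noting that if $q_c<1$ then all theta quantities in \eqref{eq:c_on_t} and \eqref{eq:relation_a_q} extend analytically past $t_c$ with the curve still of genus $1$, contradicting that $Q(0,0)$ is singular at $t_c$ (the singularity of $Q(0,0)$ must come from the collapse $q\to1$, since the theta expressions are otherwise analytic). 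I expect the main obstacle to be exactly this last point — establishing that the \emph{only} mechanism producing the singularity at $t_c$ is $q\to 1$, rather than some other degeneration of the formula for $Q(0,0)$ — since this is where one genuinely needs input about the singularity structure (that the branch points of $X$ pinch, equivalently the genus drops) rather than just formal manipulation of theta identities. Since the excerpt defers the full argument to Appendix~\ref{app:lemma:q_values}, I would there spell out the branch-point/discriminant analysis that makes "genus $0$ $\Leftrightarrow$ $\tau\to 0$ $\Leftrightarrow$ $q\to1$" precise and ties it to $t_c$ being the radius of convergence of $Q(0,0)$.
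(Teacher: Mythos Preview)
Your overall architecture is reasonable, but there are two concrete gaps, and they are exactly the points the paper handles differently.

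First, for item~(i) you invoke Lemma~\ref{lem:unif_a-model} to get $\tau\in i\mathbb{R}_{>0}$ and hence $q\in(0,1)$. But that lemma (and the parametrisation results it rests on) is proved only under the hypothesis $0<t<\frac{1}{S(1,1)}=\frac{1}{a+4}$, whereas $t_c=\frac{k}{k^2+3}>\frac{1}{a+4}$; you acknowledge this with ``granted that the triple is well defined throughout $(0,t_c)$'', but resolving it is precisely the content of the lemma. The paper sidesteps this by working not with the theta parametrisation but with the elliptic modulus: one has the classical formula $q=\exp\bigl(-\pi K(\sqrt{1-k^2})/K(k)\bigr)$, and the explicit expression for $k$ in terms of the branch points (from \cite{KuRa-12}) shows directly that $k\in(0,1)$ for all $t\in(0,t_c)$, hence $q\in(0,1)$.

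Second, and more seriously, your plan for (iv) hinges on step~(a), proving that $q(t)$ is monotone increasing. The paper explicitly remarks after the proof that this monotonicity, while plausible, is \emph{not} established and ``would likely come down to the study of the zeros $x_1$ to $x_4$ of the discriminant and get rather technical''. So you have located the difficulty in the wrong place: you flag (c) as the main obstacle, but in fact (a) is the step you cannot carry out, and without it the limit $\lim_{t\to t_c^-}q(t)$ need not even exist a priori. The paper avoids monotonicity entirely: it argues directly that as $t\to t_c$ the branch points $x_2,x_3$ of the discriminant in $x$ of $K(x,y)$ coalesce (adapting \cite[Sec.~2.3]{FaIaMa-17}), which forces $k\to 0$ in the modulus formula above, hence $K(\sqrt{1-k^2})/K(k)\to 0$ and $q\to 1$. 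This is morally the ``branch-point/discriminant analysis'' you mention at the end, but used as the primary mechanism rather than as a backstop for a monotonicity-plus-contradiction argument. Your route would work if you could prove monotonicity, but as written it has a genuine hole.
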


\begin{lem}
\label{lem:real_sing}
Let $\mathcal S$ be a non-singular, weighted step-set, and let $Q(x,y)$ be the generating function \eqref{eq:def_series} for walks in the quadrant using this step-set. Define the {\em period} of the model to be the maximum value $k$ such that $Q(0,0)=Q(0,0;t)\in\mathbb{R}[t^k]$. If $r$ is the radius of convergence of $Q(0,0)$, then the singularities of $Q(0,0)$ on the radius of convergence are precisely the points $re^{\frac{2\pi i j}{k}}$ for $j=0,1,\ldots,k-1$. The same result holds for the generating function $[x^a][y^b]Q(x,y)$ for any $a,b$ for which this generating function is non-zero.
\end{lem}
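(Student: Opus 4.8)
The plan is to exploit the periodicity structure together with Pringsheim's theorem and a Galois/symmetry argument on the theta-parametrisation. Write $k$ for the period of the model, so $Q(0,0;t) = \sum_{n\geq 0} c_{kn} t^{kn}$ with $c_{kn}\geq 0$ (nonnegativity of the coefficients is immediate since they count walks, up to the positive weights). Set $P(u) := Q(0,0; u^{1/k})$, which is a power series in $u$ with nonnegative coefficients and radius of convergence $r^k$. By Pringsheim's theorem, $u = r^k$ is a singularity of $P$, hence $t = r$ is a singularity of $Q(0,0)$, which gives the point $j=0$. The points $re^{2\pi i j/k}$ are then all singularities because $Q(0,0; \zeta t) = Q(0,0;t)$ for $\zeta$ any $k$-th root of unity (this is exactly the definition of the period), so the singularity at $t=r$ is transported to each $re^{2\pi i j/k}$.

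The substantive part is the converse: there are \emph{no other} singularities on $|t| = r$. Here I would argue that any singularity of $Q(0,0;t)$ on its circle of convergence must come from the parametrisation degenerating, i.e.\ from the kernel curve $\mathcal C$ changing genus or from one of the theta-quotients in the explicit formula for $Q(0,0)$ (via Propositions~\ref{prop:expression_Kreweras} / \ref{prop:expression_Q(x,0)_a-model} and Lemma~\ref{lem:unif_a-model}) hitting a pole or branch point. Concretely: $Q(0,0)$ is built from $q = q(t)$, $\gamma = \gamma(t)$, $\tau=\tau(t)$ and $c=c(t)$ through everywhere-defined theta functions, and these auxiliary quantities are analytic in $t$ as long as $\mathcal C$ stays genus $1$ and the defining relations \eqref{eq:ac3}--\eqref{eq:c_on_t} remain non-degenerate. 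The discriminant of the kernel (whose vanishing forces genus $0$) is a polynomial in $t$; its smallest positive root is $t_c = r$ (using Lemma~\ref{lemma:q_values}, which identifies $t\to t_c$ with $q\to 1$, i.e.\ with the genus dropping). One then checks that on $|t|=r$ this discriminant, being a polynomial whose coefficients are real and which is otherwise controlled by the symmetry $t\mapsto \zeta t$, can vanish only at the points $re^{2\pi i j/k}$: the periodicity forces the set of singularities on the circle to be invariant under multiplication by $k$-th roots of unity, and a finer analysis (the discriminant factors through $t^k$ after removing trivial monomial factors, exactly as $Q(0,0)$ does) shows it contains no further points. I would phrase this last step as: the singularities of $Q(0,0)$ on $|t|=r$ form a nonempty finite set $\Sigma$ stable under $t\mapsto \zeta t$ and stable under complex conjugation (real coefficients); minimality of $r$ as the radius and the fact that $P(u)=Q(0,0;u^{1/k})$ has, by the explicit theta-formula and Lemma~\ref{lemma:q_values}, a \emph{unique} singularity on $|u| = r^k$ (namely $u = r^k$, the genus-drop point) forces $\Sigma = \{re^{2\pi i j/k}\}$.

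For the statement about $[x^a][y^b]Q(x,y)$, I would repeat the argument verbatim: each such coefficient is again a series in $t$ with nonnegative coefficients, its period is again (a multiple of) $k$, and it is expressible through the \emph{same} analytic data $q(t),\gamma(t),\tau(t),c(t)$ and the same theta functions via the explicit expression for $Q(x,0)$ coming from Proposition~\ref{prop:expression_Q(x,0)_a-model} and the functional equation \eqref{eq:functional_eq}; hence the same discriminant controls its singularities on the circle of convergence, and Pringsheim plus the root-of-unity symmetry close the argument. Since this lemma is asserted for general non-singular weighted step-sets, the argument should be run from the general parametrisation of Lemmas~\ref{lem:param}--\ref{prop:th_param} rather than the specialised one, but the logical skeleton is unchanged.

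\textbf{Main obstacle.} The delicate point is ruling out ``spurious'' singularities on $|t|=r$ that are not the genus-drop point and not its rotations — i.e.\ proving that the only way analyticity can fail on the circle of convergence is the degeneration of the kernel curve, and then controlling the zero set of the relevant discriminant on that circle using only the period symmetry and reality. This requires being careful that the theta-quotients appearing in $Q(0,0)$ (such as $\thh(z)=\th(z|\gamma/\pi)$, whose nome $e^{i\gamma}$ also varies with $t$) do not introduce poles before $\mathcal C$ degenerates; this is where Lemma~\ref{lemma:q_values} (monotone real behaviour of $q$ on $(0,t_c)$) and Lemma~\ref{lem:real_sing}'s own appendix arguments about the location of $q$ do the real work. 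The Pringsheim part and the root-of-unity transport are routine.
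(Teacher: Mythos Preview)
Your treatment of the easy direction (Pringsheim plus rotation by $k$-th roots of unity) is correct and matches the paper. The substantive direction, however, has a genuine gap.

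Your proposed argument for ``no other singularities on $|t|=r$'' relies on the theta-parametrisation and on the discriminant of the kernel curve. This runs into several problems. First, the lemma is stated for \emph{all} non-singular weighted step-sets, whereas explicit theta formulas for $Q(0,0)$ of the type in Propositions~\ref{prop:expression_Kreweras} and~\ref{prop:expression_Q(x,0)_a-model} are only available for the decoupled models; for a generic step-set there is no such closed form to analyse. Second, Lemma~\ref{lemma:q_values} only tracks $q(t)$ for \emph{real} $t\in(0,t_c)$; it says nothing about complex $t$ on the circle $|t|=r$, so it cannot by itself exclude spurious singularities there. Third, the assertion that the relevant discriminant ``factors through $t^k$'' is not justified and is not generally true; the periodicity of $Q(0,0)$ does not automatically transfer to the discriminant of the kernel. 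Finally, invoking ``Lemma~\ref{lem:real_sing}'s own appendix arguments'' is circular.

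The paper's proof is completely different and purely combinatorial, with no reference to theta functions or the kernel curve. It first establishes a preparatory lemma: whenever $[t^\ell]Q(0,0)\neq 0$, every singularity of $[x^a][y^b]Q(x,y)$ on the circle $|t|=r$ must lie among the $\ell$-th roots of unity times $r$. The trick is to fix an excursion $\Omega$ of length $\ell$ and weight $w_\Omega>0$, split walks into blocks of length $\ell$, and observe that allowing arbitrary insertions of $\Omega$ before each block gives the substitution
\[
Q_j(x,y;t)=\frac{1}{1-w_\Omega t}\,\tilde{Q}_j\!\left(x,y;\frac{t}{1-w_\Omega t}\right),
\]
where $\tilde{Q}_j$ counts walks avoiding $\Omega$ as a block. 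A triangle-inequality comparison of $|1-w_\Omega t_0|$ with $1-w_\Omega r^\ell$ then forces any boundary singularity $t_0$ to be real positive. The main lemma follows by taking a hypothetical extra singularity $r\kappa$ with $\kappa^k\neq 1$, using the maximality of $k$ to produce an $\ell$ with $[t^\ell]Q(0,0)\neq 0$ but $\kappa^\ell\neq 1$, and obtaining a contradiction with the preparatory lemma. This argument is elementary, model-independent, and avoids analytic continuation of the parametrisation entirely.
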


\noindent Applying Lemma~\ref{lem:real_sing} to our case, since $a>0$, the period $k$ is $1$, so the only singularity on the radius of convergence is on the positive real line. Moreover, Lemma \ref{lemma:q_values} implies that $q$ and $\tau$ have no singularity for $t$ in the interval $[0,t_{c})$. In addition, $\gamma$ in Lemma~\ref{lem:unif_a-model} is analytic as well on $[0,t_{c})$; this follows from the expression of $\gamma$ in terms of two periods $\omega_1,\omega_3$ given in \cite[App.~A]{EP-22b}, and from the analytic behavior of these periods shown in \cite[Sec.~7.4]{KuRa-12}. Together these imply that if $Q(0,0)$ has a singularity at $t_c$, then it is the unique singularity within the radius of convergence, so the asymptotic form of the coefficients is uniquely determined by the behaviour at this point. The same holds for all coefficients of $Q(x,0)$ in its series expansion at $x=0$. Again by Lemma~\ref{lemma:q_values}, the point $t=t_c$ corresponds to $\hat{q}=0$. For this reason we proceed by analysing the parameters at $\hat{q}=0$.

It is convenient to parametrise $a$ using the unique $k\in(0,1)$ satisfying
\begin{equation}a=\frac{1-k^{2}}{k^{3}}.\label{eq:expression_a_k}\end{equation}
Writing $\beta=\frac{i}{\pi}\log(\hat{q})\gamma$, the equation \eqref{eq:relation_a_q} relating $q$ and $a$ becomes
\begin{equation*}
   a^{2}=\frac{\th(\beta,\hat{q})^{3}\th(4\beta,\hat{q})^{2}}{\th(2\beta,\hat{q})^{2}\th(3\beta,\hat{q})^{3}}.
\end{equation*}
This equation allows $2\cos(2\beta)$ to be written as a series in $\hat{q}$, with initial terms
\[2\cos(2\beta)=k^2-1-k^2(2k^2-1)(k^2+1)(k^2-3)(k^2-1)\hat{q}^{2}+O(\hat{q}^4).\]
This allows us to write $\beta$ itself as a series in $\hat{q}$:
\[\beta=\beta_{0}+\beta_{1}\hat{q}^{2}+\beta_{2}\hat{q}^{4}+O(\hat{q}^6),\]
where the constant term $\beta_{0}$ is given by
\begin{equation}\label{eq:relation_k_beta}
   \beta_{0}=\frac{1}{2}\cos^{-1}\left(\frac{k^{2}-1}{2}\right),
\end{equation}
while $\beta_{1}$ is given by
\begin{equation*}
   \beta_{1}=\frac{1}{2}k^2(2k^2-1)(1-k^2)\sqrt{(1+k^2)(3-k^2)}.
\end{equation*}
It follows that $t$ is also a series in $\hat{q}$, given by
\begin{equation*}
   \frac{1}{t}=-a-\frac{1}{a}-\frac{1}{a}\frac{\th(\beta,\hat{q})^2\th(5\beta,\hat{q})}{\th(3\beta,\hat{q})^3}.
\end{equation*}
The initial terms are then
\[t=\frac{k}{k^2+3}\left(1-\frac{(k^2+1)^2(3-k^2)^3}{k^2+3}\hat{q}^{2}\right)+O(\hat{q}^4).\]
In particular, the critical point $t_{c}$ is given by
\begin{equation*}
   t_{c}=\frac{k}{k^{2}+3},
\end{equation*}
so $t_{c}$ is given by an algebraic function of $a$. Taking the inverse of the series above yields
\[\hat{q}^{2}=\frac{k^2+3}{(k^2+1)^2(3-k^2)^3}\Bigl(1-\frac{t}{t_{c}}\Bigr)+O\left(\Bigl(1-\frac{t}{t_{c}}\Bigr)^2\right).\]
Notice that the parameter $\beta_0$ is connected to another relevant parameter $\theta$ introduced in \cite{DeWa-15,BoRaSa-14}. Based on \cite[Ex.~2]{DeWa-15}, $\theta$ is computed in \cite{BoRaSa-14} to describe the asymptotic behavior of walks in two-dimensional cones. More precisely, let $S(x,y)$ be the step polynomial of the model as in \eqref{eq:def_steppoly}. For non-degenerate models, there exists a unique point $(x_0,y_0)\in(0,\infty)^2$ such that $\frac{\partial S}{\partial x}(x_0,y_0)=\frac{\partial S}{\partial y}(x_0,y_0)=0$. Then the parameter $\theta$ can be defined as follows
\begin{equation*}
    \theta = \arccos \left(-\frac{\frac{\partial^2 S}{\partial x\partial y}(x_0,y_0)}{\sqrt{\frac{\partial^2 S}{\partial x^2}(x_0,y_0)\cdot \frac{\partial^2 S}{\partial y^2}(x_0,y_0)}}\right).
\end{equation*}
Standard computations give that $x_0=y_0$ are both solutions to the equation $a x_0^3=1-x_0^2$, hence with our notation \eqref{eq:expression_a_k}, we have $x_0=y_0=k$. Accordingly, $\theta = \arccos (\frac{k^2-1}{2}) = 2\beta_0$. 

\subsection{Series expansion of \texorpdfstring{$Q(x,0)$}{the series} at the critical point}

In order to understand the asymptotics of the coefficients of $Q(x,0)$, we will write $Q(x,0)$ as a series in $x$ and $\hat{q}$. Recall that an expression for $Q(x,0)$ was given parametrically by $J(z)$ and $X(z)$, see Proposition~\ref{prop:expression_Q(x,0)_a-model}. It is important to notice that in the previously cited proposition, it is assumed that $t<\frac{1}{S(1,1)}$, see \eqref{eq:as_tt}, while we now want to work with $t$ close to $t_c>\frac{1}{S(1,1)}$. We observe that the identity in Proposition~\ref{prop:expression_Q(x,0)_a-model}, giving an expression for the generating function in terms of theta functions, can be readily extended from $t\in (0,\frac{1}{S(1,1)})$ to $t\in (0,t_c)$ by analytic continuation, the two sides of the identities being actually analytic in that bigger domain.

Using then the Jacobi identity \eqref{eq:Jacobi_identity} on the expressions for $X(z)$ and $J(z)$ yields (see Lemma~\ref{lem:unif_a-model} and Proposition~\ref{prop:expression_Q(x,0)_a-model})
\begin{equation}\label{eq:Xz_in_qhat}
   X(z)=d\frac{\th(-z\hat{\tau},\hat{q})\th(-\beta-z\hat{\tau},\hat{q})}{\th(\beta-z\hat{\tau},\hat{q})\th(-2\beta-z\hat{\tau},\hat{q})}
\end{equation}
and
\begin{equation}\label{eq:Jz_in_qhat}
   J(z)=-e^{\frac{i\pi\hat{\tau}z}{\beta}}\frac{\pi}{\beta d}\frac{\th(2\beta,\hat{q})\th(-\pi\hat{\tau}\frac{\pi+2z}{2\beta},\hat{q}^{\frac{\pi}{\beta}})\th'(0,\hat{q}^{\frac{\pi}{\beta}})}{\th'(0,\hat{q})\th(-\pi\hat{\tau}\frac{\pi}{2\beta},\hat{q}^{\frac{\pi}{\beta}})\th(-\pi\hat{\tau}\frac{z}{\beta},\hat{q}^{\frac{\pi}{\beta}})},
\end{equation}
where $d:=c\exp(-\frac{4i\beta^{2}}{\pi\hat{\tau}})$ is determined by
\begin{equation*}
   d^2=\frac{\th(3\beta,\hat{q})}{\th(\beta,\hat{q})},
\end{equation*}
and $d>0$. The first few terms of $d$ are
\[d=k-k^3(k^2-3)(k^2-1)(k^2+1)\hat{q}^2+O(\hat{q}^4).\]
Using \eqref{eq:Xz_in_qhat}--\eqref{eq:Jz_in_qhat}, we can expand $X(z)$ and $J(z)$ as series in $\mathbb{C}(e^{i\hat{\tau}z})[[\hat{q}]]$ and $\mathbb{C}(e^{\frac{i\pi\hat{\tau} z}{\beta}})[[\hat{q}^2,\hat{q}^{\frac{\pi}{\beta}}]]$, respectively. Writing $\hat{z}=z\hat{\tau}$, we can write $X(z)$ and $J(z)$ as series in $\hat{z}\mathbb{C}[[\hat{z},\hat{q}]]$ and $\frac{1}{\hat{z}}\mathbb{C}[[\hat{z},\hat{q}^2,\hat{q}^{\frac{\pi}{\beta}}]]$, respectively. Writing $u=\frac{\cos(\beta_{0}+2\hat{z})}{\cos(\beta_{0})}-1\in\mathbb{C}[[\hat{z}]]$, these have initial terms
\begin{equation*}
\left\{\begin{array}{rcl}X(z)&=&\displaystyle \frac{ku}{u+3-k^{2}}+k\frac{3-k^{2}}{1+k^{2}}\left(\frac{-4\beta_{1}\sin(2\hat{z})}{(u+3-k^{2})^{2}}+\frac{u(1+k^{2})^2(1-k^{2}+k^{4}+u)}{u+3-k^{2}}\right)\hat{q}^{2}+O(\hat{q}^{4}),\medskip\\
J(z)&=&\displaystyle \frac{\pi}{\beta_{0} k}\frac{\sin(2\beta_{0})}{\sin(\frac{\pi\hat{z}}{\beta_{0}})}+
J_{1}(\hat z)\hat{q}^{2}+\frac{4\pi}{\beta_{0} k}\sin(2\beta_{0})\sin\left(\frac{\pi\hat{z}}{\beta_{0}}\right)\hat{q}^{\frac{\pi}{\beta}}+O(\hat{q}^{4}),\end{array}\right.
\end{equation*}
where
\[J_{1}(\hat z)=\frac{\pi\sin(2\beta_{0})}{\beta_{0} k\sin(\frac{\pi\hat{z}}{\beta_{0}})}\left((1+k^2-k^4)(3-k^2)(1+k^2)
+\frac{\pi\beta_{1}\hat{z}\cos(\frac{\pi\hat{z}}{\beta_{0}})}{\beta_{0}^{2}\sin(\frac{\pi\hat{z}}{\beta_{0}})}+\frac{2\beta_{1}\cos(2\beta_{0})}{\sin(2\beta_{0})}-\frac{\beta_{1}}{\beta_{0}}\right).\]
Taking the inverse of the first series, we can then write $\hat{z}$ as a series in $\mathbb{C}[\hat{q}^2,X(z)]$, which yields $J$ as a series in $\frac{1}{X(z)}\mathbb{C}[[\hat{q}^2,\hat{q}^{\frac{\pi}{\beta}},X(z)]]$. Combining this with \eqref{eq:Jinxz}, this yields $Q(x,0)$ as a series in $\mathbb{C}[[\hat{q}^2,\hat{q}^{\frac{\pi}{\beta}},x]]$. Note, however, that $\beta$ still depends on $\hat{q}$, so to complete our understanding of $Q(x,0)$ we will need to expand $\hat{q}^{\frac{\pi}{\beta}}$ as a series in $\hat{q}$. This is where logarithmic terms will appear, as $\hat{q}^{\frac{\pi}{\beta}}\in\hat{q}^{\frac{\pi}{\beta_{0}}}(1+\hat{q}^2\log(\hat{q})\mathbb{C}[[\hat{q},\hat{q}^2\log(\hat{q})]]).$ In particular, using $\beta=\beta_{0}+\beta_{1}\hat{q}^2+\dots$, we have
\[\hat{q}^{\frac{\pi}{\beta}}=\hat{q}^{\frac{\pi}{\beta_{0}}}\left(1-\frac{2\pi\beta_{1}}{\beta_{0}^{2}}\hat{q}^{2}\log(\hat{q})+\frac{2\pi^{2}\beta_{1}^{2}}{\beta_{0}^{4}}\hat{q}^{4}\log(\hat{q})^{2}+\frac{2\pi(\beta_{1}^{2}-\beta_{0}\beta_{2})}{\beta_{0}^{3}}\hat{q}^{4}\log(\hat{q})+O(\hat{q}^{6-\epsilon})\right).\]
So, finally, $Q(x,0)$ is a series in $\mathbb{C}[[\hat{q}^2,x]]+ \hat{q}^{\frac{\pi}{\beta_{0}}}\mathbb{C}[[\hat{q}^2\log(\hat{q}),\hat{q}^2,\hat{q}^{\frac{\pi}{\beta_{0}}},x]]$. Using the relation between $\hat{q}$ and $t$, we can write $\hat{q}^2$ as a series in $(t-t_{c})\mathbb{C}[[t-t_{c}]]$. Hence $Q(x,0)$ is a series in 
\begin{equation*}
    \mathbb{C}[[t-t_{c},x]]+ (t-t_{c})^{\frac{\pi}{2\beta_{0}}}\mathbb{C}[[(t-t_{c})\log(t-t_{c}),t-t_{c},(t-t_{c})^{\frac{\pi}{2\beta_{0}}},x]].
\end{equation*}
Explicitly, we can write this as:
\begin{equation}
\label{eq:series_expansion_log}
   Q(x,0)=A(x,1-t/t_{c})+\sum_{k,\ell=0}^{\infty}\sum_{m=0}^{\ell}(1-t/t_{c})^{\ell+(k+1)\frac{\pi}{2\beta_{0}}}\log(1-t/t_{c})^{m}P_{k,\ell,m}(x),
\end{equation}
where each $P_{k,\ell,m}(x)\in\mathbb{C}[[x]]$. The $A$ part in \eqref{eq:series_expansion_log} has no effect on the asymptotic expansion, this all comes from the series $P_{k,\ell,m}$. We note that $\beta_{0}\in(\frac{\pi}{4},\frac{\pi}{3})$, so $\frac{\pi}{2\beta_{0}}\in (\frac{3}{2},2)$. So the leading terms in the asymptotic expansion are:
\begin{multline*}
    (1-t/t_{c})^{\frac{\pi}{2\beta_{0}}}P_{0,0,0}(x)+(1-t/t_{c})^{1+\frac{\pi}{2\beta_{0}}}P_{0,1,0}(x)+(1-t/t_{c})^{1+\frac{\pi}{2\beta_{0}}}\log(1-t/t_{c})P_{0,1,1}(x)\\+(1-t/t_{c})^{\frac{\pi}{\beta_{0}}}P_{1,0,0}(x).
\end{multline*}
We can calculate these explicitly, for example the first term is given by
\begin{equation*}
   xP_{0,0,0}(x)=\frac{2\pi k(3+k^2)\sqrt{3+2k^2-k^4}}{\beta_{0}(1-k^2)}\left(\frac{3+k^2}{(3-k^2)^3(1+k^2)^2}\right)^{\frac{\pi}{2\beta_{0}}}\sin\left(\frac{\pi}{\beta_{0}}\hat{z}\right),
\end{equation*}
where $\hat{z}$ and $x$ are related by
\begin{equation*}
   x=k\frac{\sin(\hat{z}-\beta_{0})\sin(\hat{z})}{\sin(\hat{z}-2\beta_{0})\sin(\hat{z}+\beta_{0})} = \frac{ku}{u+3-k^{2}}.
\end{equation*}
It can be checked by a direct computation that the function $\sin\left(\frac{\pi}{\beta_{0}}\hat{z}\right)$ exactly corresponds to the generating function of the positive harmonic function for the model, as computed in \cite{Ra-14}.

The term associated to $P_{0,0,0}(x)$ determines the leading asymptotic behaviour of the coefficients:
\begin{equation}\label{eq:asymptotics_Qx0}[x^{j}][t^n]Q(x,0)\sim C\left([x^{j+1}]\sin\left(\frac{\pi}{\beta_{0}}\hat{z}\right)\right)n^{-1-\frac{\pi}{2\beta_{0}}}t_{c}^{-n}\end{equation}
for fixed $j$ as $n\to\infty$, where $C$ is a constant (only depending on $a$) given by
\begin{equation*}
   C=\frac{2\pi k(3+k^2)\sqrt{3+2k^2-k^4}}{\beta_{0}(1-k^2)\Gamma(-\frac{\pi}{2\beta_{0}})}\left(\frac{3+k^2}{(3-k^2)^3(1+k^2)^2}\right)^{\frac{\pi}{2\beta_{0}}}.
\end{equation*}
 In order to verify that there really is a logarithmic term in this expansion, we also calculate $P_{0,1,1}(x)$ exactly. In fact this only differs from $P_{0,0,0}(x)$ by a constant multiple (dependent on $a$ but not $x$):
\begin{equation*}
   xP_{0,1,1}(x)=\frac{\pi^{2} k^3(k^2+3)(1-2k^2)(3-k^{2})(1+k^2)\left(\frac{3+k^2}{(3-k^2)^3(1+k^2)^2}\right)^{1+\frac{\pi}{2\beta_{0}}}}{2\beta_{0}^{3}}\sin\left(\frac{\pi}{\beta_{0}} \hat{z}\right).
\end{equation*}
The only value of $k$ for which $P_{0,1,1}(x)=0$ is $k=\frac{1}{\sqrt{2}}$, corresponding to $a=\sqrt{2}$, $\beta_{0}=\frac{1}{2}\arccos(-\frac{1}{4})$ and $t_{c}=\frac{\sqrt{2}}{7}$, However we note that there are still logarithmic terms in the asymptotics in this case, for example $P_{0,1,2}(x)\neq 0$.

\subsection{The limit \texorpdfstring{$a\to0$}{when the parameter goes to zero}}
A priori our results only apply for $a>0$, and indeed this is necessary as some functions such as $P_{0,0,0}$ diverge for $a=0$. Nonetheless, we see that the leading asymptotic expression \eqref{eq:asymptotics_Qx0} converges in a way that somewhat corresponds to the $a=0$ case. Note that as $a\to 0$ we also have $k\to 1$, $\beta_{0}\to \frac{\pi}{4}$ and $t_{c}\to\frac{1}{4}$. The limit of the constant $C$ as $a\to 0$ is $\frac{4}{\pi}$. Moreover, we have
\begin{equation*}
   x=\frac{\sin(\hat{z}-\frac{\pi}{4})\sin(\hat{z})}{\sin(\hat{z}-\frac{\pi}{2})\sin(\hat{z}+\frac{\pi}{4})},
\end{equation*}
from which it follows that
\begin{equation*}
   \sin\left(\frac{\pi}{\beta_{0}}\hat{z}\right)=\sin\left(4\hat{z}\right)=\frac{4x}{(1-x)^{2}}.
\end{equation*}
So \eqref{eq:asymptotics_Qx0} would give
\begin{equation*}
   [x^{j}][t^n]Q(x,0)\sim \frac{16}{\pi}(j+1)n^{-3}4^{n}.
\end{equation*}
The only problem with this is that $[x^{j}][t^n]Q(x,0)$ is $0$ when $j$ and $n$ have opposite parity, whereas for terms with the same parity the correct asymptotic formula is
\begin{equation*}
   [x^{j}][t^n]Q(x,0)\sim \frac{32}{\pi}(j+1)n^{-3}4^{n}.
\end{equation*}
In other words, this correctly yields the behaviour of $Q(x,0)$ around $t=t_{c}=\frac{1}{4}$, however there is a second critical point, $-\frac{1}{4}$ on the radius of convergence.

\section{Polyharmonicity of coefficients}
\label{sec:polyharmonicity}

\noindent Due to \eqref{eq:series_expansion_log} we already have a fair amount of information about the coefficients of $Q(x,0)$ at the critical point. One can now use this in order to describe the asymptotic behaviour of the (weighted) number of paths $q((0,0),(i,j);n):=q(i,j;n)$ in the quadrant from the origin to $(i,j)$ with $n$ steps, see \eqref{eq:def_series}. In particular, we will see in Lemma~\ref{lem:expansion_q(s,t)} that the dependence on the number of steps $n$ is given in terms of a mix of powers of $n$ and logarithms. In a similar fashion as in~\cite{Ne-23}, one can then show that the dependence on the endpoint $(i,j)$ is given in terms of so-called discrete polyharmonic functions (see \cite{Ne-22,Ne-23,ArNaCr-83,Ra-14,ChFuRa-20}).

Given a step-set $\mathcal{S}$ with corresponding weights $\left(\omega_s\right)_{s\in\mathcal{S}}$, we define a discrete Laplacian operator $\triangle$ acting on functions $v : \mathbb Z^2\to \mathbb C$  as follows:
\begin{equation*}
    \triangle v ( A ) = \sum_{s\in\mathcal S} \omega_s v(A+s) - t v(A).
\end{equation*}
We say that a function $v$ is $t$-harmonic (resp.\ $t$-polyharmonic of order $k$, for some positive integer $k$) if for all points $A$ in the quarter plane, $\triangle v(x)=0$ (resp.\ $\triangle^k v(x)=0$).
Furthermore, in order to keep the notation compact in the following, let 
\begin{equation*}
    \rho:=\frac{\pi}{2\beta_0},
\end{equation*}
where $\beta_0$ is defined as in \eqref{eq:relation_k_beta}. As mentioned in the previous section, $\rho$ varies (continuously) in $\left(\frac{3}{2},2\right)$ as $a$ varies in $(0,\infty)$.

Our main objective in this section is to show the following result:
\begin{thm}
\label{thm:coefficients_PHF}
If $\rho\notin\Q$, then for any $p>0$ (not necessarily integer), we have 
\begin{equation*} \label{eq:coefficients_PHF}   q(i,j;n)=t_c^{-n}\sum_{\substack{k\geq 1,\ell\geq m\geq 0\\k\rho+\ell+1<p}}v_{k,\ell,m}(i,j)\frac{(\log{n})^m}{n^{k\rho+\ell+1}}+\mathcal{O}\left(t_c^{-n} \left(\frac{\log{n}}{n}\right)^p\right),
\end{equation*}
where the $v_{k,\ell,m}$ are discrete $t_c$-polyharmonic functions of order $\ell-m+1$.
If $\rho=\frac{u}{v}\in\Q$ with $u$ and $v$ coprime, then the same holds with the additional condition that the summation index $k$ be at most $v$. 
\end{thm}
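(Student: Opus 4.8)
The plan is to promote the singular expansion \eqref{eq:series_expansion_log} of $Q(x,0)$ at the critical point to a singular expansion of the full bivariate series $Q(x,y)$, then to read off the coefficients $q(i,j;n)=[x^iy^j][t^n]Q(x,y)$ by singularity analysis, and finally to extract the polyharmonicity from the step-by-step recursion for walks. For the first part, note that by the $x\leftrightarrow y$ symmetry of the model $Q(0,y)=Q(y,0)$, while $Q(0,0)$ has already been handled; and $K(x,y)=xy\bigl(1-tS(x,y)\bigr)$ is analytic in $t$ and, for $|x|,|y|$ small enough, bounded away from $0$ on a neighbourhood of $t_c$. Dividing the functional equation \eqref{eq:functional_eq} by $K(x,y)$ therefore converts \eqref{eq:series_expansion_log} into
\[
 Q(x,y)=\widetilde A(x,y;1-t/t_c)+\sum_{k\geq 1,\ \ell\geq m\geq 0}(1-t/t_c)^{\,k\rho+\ell}\,\log(1-t/t_c)^{m}\,\Phi_{k,\ell,m}(x,y),
\]
with $\rho=\tfrac{\pi}{2\beta_0}$, each $\Phi_{k,\ell,m}\in\C[[x,y]]$, and $\widetilde A$ analytic in $t$ at $t_c$; here I have shifted the summation index by $1$ relative to \eqref{eq:series_expansion_log}, so the smallest exponent appearing is $\rho\in(\tfrac32,2)$. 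The very construction that produces \eqref{eq:series_expansion_log} --- substituting the locally convergent expansions of $X(z)$, $J(z)$, $\beta$, and of $\hat q$ as a series in $\sqrt{t_c-t}$ --- shows that this is a genuine asymptotic expansion on a slit neighbourhood of $t_c$, whose tail over exponents $>p$ is $O\bigl(|1-t/t_c|^{p-\varepsilon}\bigr)$ there.

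Fix now $(i,j)$ and set $\bar Q_{i,j}(t):=[x^iy^j]Q(x,y)=\sum_n q(i,j;n)t^n$. Since $a>0$ the period of the model is $1$, so by Lemma~\ref{lem:real_sing} the point $t_c$ is the \emph{only} singularity of $\bar Q_{i,j}$ on its circle of convergence; combined with the analyticity of $q,\tau,\gamma,c$ in a slit neighbourhood of $t_c$ (the remarks following Lemma~\ref{lem:real_sing}), $\bar Q_{i,j}$ extends to a $\Delta$-domain at $t_c$. Applying $[x^iy^j]$ to the expansion above, truncating at exponent $>p$, and invoking the Flajolet--Odlyzko transfer theorem term by term --- using $[t^n]\bigl((1-t/t_c)^{\beta}\log(1-t/t_c)^{m}\bigr)=t_c^{-n}n^{-\beta-1}\bigl(\Gamma(-\beta)^{-1}(\log n)^{m}+\text{lower powers of }\log n\bigr)$, with leading coefficient nonzero because $\beta=k\rho+\ell\notin\Z$ when $\rho\notin\Q$ --- yields the asserted expansion, in which $v_{k,\ell,m}(i,j)$ is an invertible (triangular) linear combination, with coefficients depending only on $a$, of the numbers $[x^iy^j]\Phi_{k,\ell',m'}$ with $\ell'\leq\ell$ and $m'\geq m$; the $\widetilde A$ part is analytic in a disk of radius $>t_c$ and so contributes nothing to the asymptotics, and the truncation leaves the error $\mathcal O\bigl(t_c^{-n}(n^{-1}\log n)^{p}\bigr)$. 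If instead $\rho=u/v$ with $\gcd(u,v)=1$, distinct pairs $(k,\ell)$ can produce the same exponent and $k\rho+\ell$ is an integer exactly when $v\mid k$; one regroups so that every surviving term has $1\leq k\leq v$ (absorbing any term with $k>v$ into the term with the same exponent, index in $\{1,\ldots,v\}$ and larger $\ell$), observes that integer-exponent terms survive only through their logarithmic parts, and applies the same transfer step.

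For the polyharmonicity, set $q\equiv0$ outside the quarter plane; decomposing a walk by its last step gives $q(i,j;n+1)=\sum_{s\in\mathcal S}\omega_s\,q((i,j)-s;n)$ for every $(i,j)$ in the quarter plane, which summed over $n$ is exactly the recursion among the $\bar Q_{i,j}(t)$ encoded by \eqref{eq:functional_eq}. Substituting the expansion derived above, expanding $1/t=t_c^{-1}\sum_{p\geq0}(1-t/t_c)^{p}$, and equating the coefficient of $(1-t/t_c)^{k\rho+\ell}\log(1-t/t_c)^{m}$ for $(i,j)$ in the quarter plane, one obtains at $t=t_c$ a relation of the shape
\[
 \triangle R_{k,\ell,m}=\text{(a fixed linear combination of the }R_{k,\ell',m}\text{ with }\ell'<\ell\text{)},\qquad R_{k,\ell,m}(i,j):=[x^iy^j]\Phi_{k,\ell,m},
\]
where $\triangle$ is the discrete Laplacian of the statement evaluated at $t=t_c$. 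Thus $\triangle R_{k,m,m}=0$, and by induction on $\ell-m$, $\triangle^{\ell-m+1}R_{k,\ell,m}=0$. Since the transfer step above expresses each $v_{k,\ell,m}$ as an invertible linear combination of the $R_{k,\ell',m'}$ with the same $k$, $\ell'\leq\ell$ and $m'\geq m$, and $\triangle$ preserves the index $k$ while strictly lowering $\ell$ (without raising $m$), the same induction gives $\triangle^{\ell-m+1}v_{k,\ell,m}=0$; that is, $v_{k,\ell,m}$ is $t_c$-polyharmonic of order $\ell-m+1$. This is precisely the mechanism of \cite{ChFuRa-20,Ne-22,Ne-23}, now run with logarithmic and non-integer powers present.

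The delicate point is the passage from a \emph{formal} identity to an \emph{asymptotic} one: one must verify that \eqref{eq:series_expansion_log}, and its bivariate extension above, has a remainder genuinely $O(|1-t/t_c|^{p-\varepsilon})$ uniformly on a $\Delta$-domain --- for which the local convergence of all the intermediate series entering the construction is needed --- so that the transfer theorem may legitimately be applied coefficientwise in $(i,j)$. The other point requiring care is the bookkeeping in the polyharmonicity step: one has to check that $\triangle$ interacts with the $(k,\ell,m)$-grading exactly as claimed (it lowers $\ell$ by one and does not increase $m$), since it is this that pins the polyharmonic order down to precisely $\ell-m+1$ rather than merely $\ell+1$.
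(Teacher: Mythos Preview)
Your overall architecture is right, and your polyharmonicity argument is a clean alternative to the paper's. The gap is in the first step, where you propose to obtain the bivariate singular expansion of $Q(x,y)$ by ``dividing the functional equation by $K(x,y)$'' and claiming that $K(x,y)$ is bounded away from~$0$ for $|x|,|y|$ small and $t$ near $t_c$. This is false: one has $K(0,0;t)=0$, and more seriously, for any small radius $\epsilon$ the torus $|x|=|y|=\epsilon$ meets the kernel curve $\{K=0\}$ when $t$ is near $t_c$ (since $S(x,y)\approx \tfrac1x+\tfrac1y$ sweeps out a large disk containing $1/t$). So $G(x,y;t)/K(x,y;t)$ has apparent poles on any Cauchy contour you would use to extract $[x^iy^j]$, and the passage from the expansion of $Q(x,0)$ to one of $\bar Q_{i,j}(t)$ is not justified by this division.

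The paper bypasses this entirely by never forming a bivariate analytic object. Starting from the boundary expansion \eqref{eq:expansion_boundary} for $q(i,0;n)$, it proves the expansion of $q(i,j;n)$ by induction on $r=\min(i,j)$ (Lemma~\ref{lem:expansion_q(s,t)}): the step recursion
\[
 q(i,r;n+1)=q(i,r+1;n)+q(i,r-1;n)+q(i-1,r;n)+q(i+1,r;n)+a\,q(i-1,r-1;n)
\]
expresses $q(i,r+1;n)$ as a finite combination of values with $\min\le r$, and Lemma~\ref{lem:n+1_to_n} converts the $n+1$ asymptotics into $n$ asymptotics of the same shape. Your route can be salvaged by exactly the same induction carried out at the level of the generating functions $\bar Q_{i,j}(t)$ (solving the recursion for $\bar Q_{i,r+1}$ introduces only a factor $1/t$, analytic at $t_c$), but not by a global division by $K$.

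For the polyharmonicity, your approach and the paper's are genuinely dual and both valid. The paper (Lemma~\ref{lemma:PHF}) compares coefficients of $(\log n)^m/n^{k\rho+\ell+1}$ in the $n$-recursion, using Lemma~\ref{lem:n+1_to_n} and the uniqueness Lemma~\ref{lemma:PHF_preparatory}; you compare coefficients of $(1-t/t_c)^{k\rho+\ell}\log^m(1-t/t_c)$ in the $t$-recursion and then transfer. Your triangularity claim is correct: the full Flajolet--Odlyzko expansion of $(1-t/t_c)^{\alpha}\log^m(1-t/t_c)$ contributes only to terms $(\log n)^j/n^{\alpha+1+i}$ with $i\ge0$ and $0\le j\le m$, so every $R_{k,\ell',m'}$ entering $v_{k,\ell,m}$ has $\ell'-m'\le \ell-m$, and the polyharmonic order $\ell-m+1$ follows. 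This generating-function viewpoint is arguably more transparent than the paper's bookkeeping, once the expansion of each $\bar Q_{i,j}$ has been properly established.
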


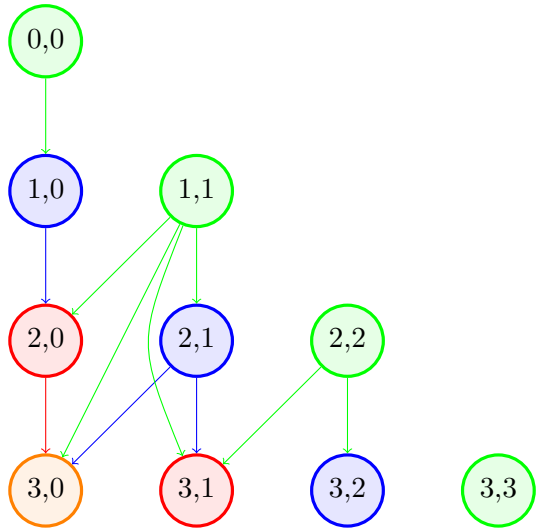
\begin{figure}[ht]
\centering
\begin{tikzpicture}[
roundnode/.style={circle, draw=red!100, fill=red!10, very thick, minimum size=7mm},
biharmnode/.style={circle, draw=blue!100, fill=blue!10, very thick, minimum size=7mm},
harmnode/.style={circle,draw=green!100, fill=green!10, very thick, minimum size=7mm},
4harmnode/.style={circle,draw=orange!100, fill=orange!10, very thick, minimum size=7mm}
]
\node[harmnode]      (00)                              {0,0};
\node[biharmnode]        (10)       [below=of 00] {1,0};
\node[roundnode]      (20)       [below=of 10] {2,0};
\node[harmnode]        (11)       [right=of 10] {1,1};
\node[biharmnode] (21)           [right=of 20] {2,1};
\node[harmnode] (22)           [right=of 21] {2,2};
\node[4harmnode] (30) [below=of 20] {3,0};
\node[roundnode] (31) [below=of 21] {3,1};
\node[biharmnode] (32) [below=of 22] {3,2};
\node[harmnode] (33) [right=of 32] {3,3};

\path[->, green] 
(00) edge (10)
(11) edge (30)
(11) edge (20)
(11) edge (21)
(22) edge (31)
(22) edge (32);

\path[->,blue]
(10) edge (20)
(21) edge (30)
(21) edge (31);

\path [->,red]
(20) edge (30);

\draw[->,green]
(11) .. controls (1.2,-4) .. (31);
\end{tikzpicture}
\caption{ The arrows indicate a change in the coefficients of $f_{\ell,m}:=\frac{(\log{n})^m}{n^\ell}$ when substituting \eqref{eq:PHF_n+1_to_n} into \eqref{eq:lemma_PHF_condition2}. We see that $f_{0,0}$, $f_{1,1}$, $f_{2,2}$ and $f_{3,3}$ (the green nodes) are not affected as we can only ever go down and to the left as indicated (see Lemma~\ref{lem:n+1_to_n}); thus their coefficients $v_{0,0}$, $v_{1,1}$, etc., are harmonic. Drawing the same diagram for $\triangle q(x;n)$, these nodes therefore disappear, and the unaffected coefficients are then those of the blue nodes. Thus, $v_{1,0}, v_{2,1}$, \dots, are biharmonic, and so on.} 
\label{fig:PHF_coefficients}
\end{figure}

\noindent See Figure~\ref{fig:PHF_coefficients} for an illustration of Theorem~\ref{thm:coefficients_PHF} for fixed $k$, showing in particular the inter-dependency of the polyharmonic functions $v_{k,\ell,m}$. For irrational $\rho$ we will have infinitely many such diagrams; whereas for rational $\rho$ there will be only finitely many.

The rest of Section~\ref{sec:polyharmonicity} is devoted to the proof of Theorem~\ref{thm:coefficients_PHF}. In the following we will assume that $\rho\notin\Q$; otherwise we only need to bound the $k$ in the summation indices by its denominator (as in Theorem~\ref{thm:coefficients_PHF}). Using a standard transfer between the local behaviour of the generating function around the singularity and the asymptotics of the coefficients as in~\cite[VI.2]{FlSe-09}, we know due to \eqref{eq:series_expansion_log} that we have an asymptotic expansion of $q(i,0;n)$ and $q(0,j;n)$ (which are identical due to the symmetry of the model; this is however not necessary for the following) of the form 
\begin{equation}\label{eq:expansion_boundary}
q(i,0;n)=t_c^{-n}\sum_{\substack{k\geq 1,\ell\geq m\geq 0\\k\rho+\ell<p}}v_{k,\ell,m}(i,0)\frac{(\log{n})^m}{n^{k\rho +\ell+1}}+\mathcal{O}\left(t_c^{-n}\left(\frac{\log{n}}{n}\right)^p\right),
\end{equation}
for some coefficients denoted by $v_{k,\ell,m}(i,0)$. The structure of the proof of Theorem~\ref{thm:coefficients_PHF} is as follows: first, we want to show that a similar expansion holds not only for $q(i,0;n)$, but also for $q(i,j;n)$, which will be done in Lemma~\ref{lem:expansion_q(s,t)}. Then we will make use of this in order to show that the resulting coefficients $v_{k,\ell,m}(i,j)$ are discrete polyharmonic functions, see Lemma~\ref{lemma:PHF}.

Before we start, we will state a simple lemma, which will turn out to be useful:
\begin{lem}
\label{lem:n+1_to_n}
For any integer $p>\ell$, we have
\begin{equation}
\label{eq:PHF_n+1_to_n}\frac{(\log(n+1))^m}{(n+1)^\ell}=\frac{(\log{n})^m}{n^\ell}+\sum_{i=1}^{p-\ell}\sum_{j=\max(m-i,0)}^m c_{\ell,m,i,j}\frac{(\log{n})^j}{n^{\ell+i}}+\mathcal{O}\left(\frac{(\log{n})^m}{n^{p+1}}\right),
\end{equation}
with the $c_{\ell,m,i,j}$ constants (in particular, if $m=0$, then we will have no logarithmic parts).
\end{lem}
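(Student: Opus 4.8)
The plan is to prove \eqref{eq:PHF_n+1_to_n} by elementary asymptotic expansion, treating the two factors $(n+1)^{-\ell}$ and $(\log(n+1))^m$ separately and then multiplying out. First I would write $(n+1)^{-\ell} = n^{-\ell}(1+1/n)^{-\ell}$ and expand $(1+1/n)^{-\ell}$ by the (generalized, since $\ell$ need only be a nonnegative integer here — in fact the statement only needs integer $\ell$) binomial series: $(1+1/n)^{-\ell} = \sum_{i=0}^{p-\ell} \binom{-\ell}{i} n^{-i} + \mathcal{O}(n^{-(p-\ell+1)})$, so that $(n+1)^{-\ell} = \sum_{i=0}^{p-\ell}\binom{-\ell}{i} n^{-\ell-i} + \mathcal{O}(n^{-p-1})$. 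Second, for the logarithmic factor I would use $\log(n+1) = \log n + \log(1+1/n) = \log n + \sum_{r\geq 1}\frac{(-1)^{r-1}}{r}n^{-r}$, so $\log(n+1) = \log n \cdot\bigl(1 + \frac{1}{n\log n}\sum_{r\geq 1}\frac{(-1)^{r-1}}{r}n^{-(r-1)}\bigr)$; raising to the $m$-th power and expanding the resulting finite binomial sum gives $(\log(n+1))^m = \sum_{j=0}^m \binom{m}{j}(\log n)^j \bigl(\log(1+1/n)\bigr)^{m-j}$, and each $\bigl(\log(1+1/n)\bigr)^{m-j}$ is $\mathcal{O}(n^{-(m-j)})$ and expands as a power series in $1/n$ with no logarithms and with lowest-order term of order $n^{-(m-j)}$.

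The third step is to multiply these two expansions and collect terms. The product of the $(\log n)^j$ coming from the log factor (with $j \le m$) and the pure power-of-$1/n$ series contributes, to the term $n^{-\ell-i}$, only values $j$ with $m - j \le i$, i.e. $j \ge m-i$; and of course $j \le m$. This is exactly the summation range $j = \max(m-i,0)$ to $m$ appearing in \eqref{eq:PHF_n+1_to_n}. The $i=0$ contribution is precisely the leading term $(\log n)^m/n^\ell$ (since $j$ must equal $m$ when $i=0$), which is why it is pulled out in front. Truncating both expansions at order consistent with an $\mathcal{O}\bigl((\log n)^m n^{-p-1}\bigr)$ error — note that the worst surviving error term carries at most a $(\log n)^m$ factor — and relabelling the coefficients as $c_{\ell,m,i,j}$ completes the identity. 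The special case $m=0$ is immediate: the log factor is identically $1$, so only the binomial expansion of $(1+1/n)^{-\ell}$ survives and no logarithms appear.

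I do not expect any genuine obstacle here; the only point requiring a little care is bookkeeping the error term uniformly, i.e. checking that when one truncates $(1+1/n)^{-\ell}$ at order $n^{-(p-\ell)}$ and $(\log(n+1))^m$ at the appropriate order, all discarded cross terms are $\mathcal{O}\bigl((\log n)^m n^{-p-1}\bigr)$ — this follows because each discarded power of $1/n$ beyond order $p$ is multiplied by at most $(\log n)^m$, and $p > \ell$ guarantees the range $i = 1,\dots,p-\ell$ is nonempty and the truncation is consistent. One should also note that the coefficients $c_{\ell,m,i,j}$ are universal constants depending only on the indicated indices (not on $n$ or $p$), which is clear from the construction since they are finite sums of products of binomial coefficients $\binom{-\ell}{i'}$, $\binom{m}{j}$ and coefficients of powers of $\log(1+u)$ evaluated formally.
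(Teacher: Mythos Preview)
Your proposal is correct and follows essentially the same approach as the paper: write $\log(n+1)=\log n+\log(1+1/n)$, expand everything as a series in $1/n$, and collect terms. The paper states this in one sentence without the bookkeeping you provide on the range of~$j$ and the error control, but your added detail is consistent with the claimed identity.
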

\begin{proof}
The result follows immediately from writing $\log(n+1)=\log(n)+\log(1+\frac{1}{n})$ and expanding as a series in $\frac{1}{n}$.
\end{proof}

\noindent Note that in particular all terms inside the sum on the right-hand side have powers of the logarithm not exceeding $m$, and powers of $n$ strictly smaller than $-\ell$ (this is the reason why in Figure~\ref{fig:PHF_coefficients} all arrows can only go downwards, and possibly to the left).

We can now extend the asymptotic expansion of $q(i,0;n)$ as in \eqref{eq:expansion_boundary} to one of $q(i,j;n)$. 
\begin{lem}
\label{lem:expansion_q(s,t)}
    For any $(i,j)\in\Z^2$ and any $p>0$, we have 
    \begin{equation}\label{eq:expansion_q(s,t;n)}
        q(i,j;n)=t_c^{-n}\sum_{\substack{k\geq 1, \ell\geq m\geq 0\\ k\rho+\ell+1<p}}v_{k,\ell,m}(i,j)\frac{(\log n)^m}{n^{k\rho+\ell+1}}+\mathcal{O}\left(t_c^{-n}\left(\frac{\log{n}}{n}\right)^p\right).
    \end{equation}
\end{lem}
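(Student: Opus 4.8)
The plan is to bootstrap from the boundary expansion \eqref{eq:expansion_boundary} to the interior expansion \eqref{eq:expansion_q(s,t;n)} by an induction on the "level sets" $i+j$ of the endpoint, using the step-by-step recurrence satisfied by the numbers of walks. Concretely, $q(i,j;n)$ satisfies, for $(i,j)$ in the interior of the quadrant (so all neighbours $(i,j)-s$ stay in the closed quadrant), the backward recursion
\begin{equation*}
   q(i,j;n+1)=\sum_{s\in\mathcal S}\omega_s\, q((i,j)-s;n),
\end{equation*}
and on the boundary axes the analogous recursion holds with the forbidden terms simply absent (equivalently, set $q$ to $0$ outside the quadrant). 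The point is that this expresses $q(i,j;n+1)$ as a finite $\mathbb{C}$-linear combination of $q(i',j';n)$ with $i'+j'\le i+j-\min_{s}(s_x+s_y)$ \emph{plus}, crucially, a single term $q(i+1,j+1;n)$ coming from the $\mathsf{NE}$ step $s=(1,1)$, which has $i'+j'=i+j+2$. So the naive induction on $i+j$ does not immediately close; one instead solves for the highest term: from the recursion, $a\,q(i+1,j+1;n)$ equals $q(i,j;n+1)$ minus the lower-level contributions, and one runs the induction on $i+j$ \emph{downward} from the boundary, i.e. one expresses every $q(i,j;n)$ in terms of $q$ at points with strictly smaller $i+j$ together with $q(i-1,j-1;n+1)$, iterating until the second argument reaches an axis where \eqref{eq:expansion_boundary} is known. (Alternatively, and perhaps more cleanly, one observes that $Q(x,0)$ and $Q(0,y)$ determine $Q(x,y)$ via the functional equation \eqref{eq:functional_eq}, and extracts coefficients there; but the recursive/combinatorial route keeps the polyharmonic bookkeeping transparent and is what Section~\ref{sec:polyharmonicity} needs.)

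The key steps, in order, are: \textbf{(1)} Fix $p>0$ and observe that it suffices to prove the claim for $(i,j)$ in the closed first quadrant, since $q(i,j;n)=0$ otherwise and the statement is vacuous. \textbf{(2)} Set up the induction on $N:=i+j$. The base cases $N=0$ (the point $(0,0)$) and, more generally, the two axes $j=0$ and $i=0$ are exactly \eqref{eq:expansion_boundary} (with the indexing shift $k\rho+\ell<p$ versus $k\rho+\ell+1<p$ absorbed by enlarging $p$). \textbf{(3)} For an interior point, write the recursion solved for the $\mathsf{NE}$-predecessor and reduce the endpoint by one diagonal step at a time until an axis is hit; this expresses $q(i,j;n)$ as a finite linear combination $\sum_r c_r\, q(i_r,0;\,n+m_r)$ (and similarly hitting the $i=0$ axis), with $m_r\ge 0$ integers and $c_r\in\mathbb{C}$. \textbf{(4)} Plug the known boundary expansion into each $q(i_r,0;n+m_r)$ and re-expand about $n$ using Lemma~\ref{lem:n+1_to_n}: shifting $n\mapsto n+m_r$ only produces terms with the same or larger powers of $n$ in the denominator and the same or smaller powers of $\log n$, so the set of exponents $\{k\rho+\ell+1\}$ and log-powers $\{m\}$ appearing is preserved, and the error term $\mathcal{O}(t_c^{-n}(\log n/n)^p)$ is stable under the shift. \textbf{(5)} Collect like terms $\frac{(\log n)^m}{n^{k\rho+\ell+1}}$; their coefficients define $v_{k,\ell,m}(i,j)$, and since $\rho\notin\mathbb{Q}$ (resp.\ $\rho=u/v$ with $k\le v$), the exponents $k\rho+\ell$ are pairwise distinct as $(k,\ell)$ ranges over the relevant finite set, so the decomposition is unambiguous and the $v_{k,\ell,m}$ are well defined.

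The main obstacle is step \textbf{(3)}: controlling the diagonal reduction so that it genuinely terminates on an axis and produces only finitely many boundary terms with nonnegative time-shifts, and making sure the weight $a>0$ in the denominator does not interfere (it is a fixed nonzero constant, so this is harmless, but it must be invoked — for $a=0$ the model degenerates and the argument would have to be replaced, consistent with the $a\to 0$ discussion in the paper). A secondary technical point is keeping the error term uniform: one must check that the implied constants in \eqref{eq:expansion_boundary}, after finitely many applications of Lemma~\ref{lem:n+1_to_n} and finitely many linear combinations, remain $\mathcal{O}(t_c^{-n}(\log n/n)^p)$ with a constant depending only on $(i,j)$ and $p$; this is routine because everything is a \emph{finite} linear combination at each fixed endpoint, but it should be stated. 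Finally, one should remark that the same argument applied to $[x^a][y^b]Q(x,y)$ for arbitrary fixed $(a,b)$ (using Lemma~\ref{lem:real_sing} to guarantee a unique dominant singularity) gives the full interior statement, which is what feeds into Lemma~\ref{lemma:PHF} and hence Theorem~\ref{thm:coefficients_PHF}.
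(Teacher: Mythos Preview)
Your overall strategy --- bootstrap the interior expansion from the boundary expansion \eqref{eq:expansion_boundary} via the one-step recursion and Lemma~\ref{lem:n+1_to_n} --- is exactly the paper's. However, your implementation of the induction contains a genuine error. In the backward recursion $q(i,j;n+1)=\sum_{s}\omega_s\, q((i,j)-s;n)$, the $\mathsf{NE}$ step $s=(1,1)$ contributes the predecessor $(i-1,j-1)$, \emph{not} $(i+1,j+1)$; there is no term at $(i+1,j+1)$, and hence no ``diagonal reduction'' of the kind you describe. The predecessors with $i'+j'>i+j$ come instead from the $\mathsf S$ and $\mathsf W$ steps, and there are \emph{two} of them, $(i,j+1)$ and $(i+1,j)$, both at level $i+j+1$. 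One recursion identity cannot be solved for two unknowns, so the induction on $i+j$ does not close as written (it can be rescued by solving a triangular system along each anti-diagonal using both boundary axes as input, but that is more work than necessary, and it is not what you wrote).

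The paper's fix is to induct on $r:=\min(i,j)$ rather than on $i+j$. Writing the recursion at $(i,r)$ with $i\ge r+1$,
\[
q(i,r;n+1)=q(i,r+1;n)+q(i,r-1;n)+q(i-1,r;n)+q(i+1,r;n)+a\,q(i-1,r-1;n),
\]
every term except $q(i,r+1;n)$ has $\min\le r$, as does the left-hand side; so $q(i,r+1;n)$ is the unique new quantity and one solves for it directly (the relevant weight is $1$, so no division by $a$ is needed --- your worry about $a>0$ is moot here). The symmetric argument treats points $(r+1,j)$. With this induction variable in place, your steps~(4) and~(5) go through exactly as you outline.
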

\begin{proof}
    By induction on $r:=\min(i,j)$. For $r<0$, the statement is trivial because we have $q(i,j;n)=0$ (we will use repeatedly this convention throughout the proof). For $r=0$, the statement is precisely \eqref{eq:expansion_boundary}. So let us suppose that we already know that \eqref{eq:expansion_q(s,t;n)} holds up to a given $r$, and consider a point $(i,r+1)$. We can then write 
    \begin{multline*}
        q(i,r;n+1)=\\q(i,r+1;n)+q(i,r-1;n)+q(i-1,r;n)+q(i+1,r;n)+a q(i-1,r-1;n).
    \end{multline*} 
    By induction hypothesis and Lemma~\ref{lem:n+1_to_n} applied to $q(i,r;n+1)$, the statement follows.
\end{proof}

\noindent In the following Lemmas~\ref{lemma:PHF_preparatory} and~\ref{lemma:PHF}, we will formalize the argumentation given in Figure~\ref{fig:PHF_coefficients}.
By \cite{Ne-23} we know that, ordering the triples $(k,\ell,m)$ in \eqref{eq:expansion_q(s,t;n)} such that the weight functions $f_{k,\ell,m}(n):=\frac{(\log{n})^m}{n^{k+\ell\rho+1}}$ are decreasing, then the corresponding coefficient functions $v_{k,\ell,m}$ are polyharmonic functions of increasing order; i.e.~$v_{1,0,0}$ is harmonic, $v_{1,1,0}$ is biharmonic, $v_{1,1,1}$ is triharmonic, and so on. It turns out, however, that the fact that we know the $f_{k,\ell,m}$ explicitly allows us to greatly improve upon this statement.

\begin{lem}
    \label{lemma:PHF_preparatory}
    Suppose we have $p>0$ and a sequence of reals $a_{k,\ell,m}$ such that \begin{equation*}
        \sum_{\substack{k\geq 1,m\geq \ell\geq 0\\k\rho+\ell< p}}\frac{a_{k,\ell,m}}{n^{k\rho+\ell+1}}(\log{n})^m=\mathcal{O}\left({n^{-p}}\right).
    \end{equation*}
    Then, $a_{k,\ell,m}=0$ for all triples $(k,\ell,m)$.
\end{lem}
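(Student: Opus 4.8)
The plan is to prove this by a double induction, peeling off terms one at a time according to the ordering of the weight functions $f_{k,\ell,m}(n) = (\log n)^m/n^{k\rho+\ell+1}$ by decreasing magnitude. The crucial point is that for $\rho\notin\Q$ the exponents $k\rho+\ell+1$ appearing in the sum are \emph{all distinct} (if $k\rho+\ell = k'\rho+\ell'$ with $(k,\ell)\neq(k',\ell')$ then $\rho = (\ell-\ell')/(k'-k)\in\Q$, a contradiction), and for fixed exponent the logarithmic powers $m$ are naturally ordered. So the weight functions are totally ordered by eventual dominance: $f_{k,\ell,m}(n)/f_{k',\ell',m'}(n)\to 0$, $\to\infty$, and the only ties are genuine equalities of triples.

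First I would isolate the dominant term. Among all triples $(k,\ell,m)$ with $k\rho+\ell < p$, let $(k_0,\ell_0)$ minimize $k\rho+\ell$ (unique since $\rho$ is irrational), and among triples with that minimal exponent let $m_0$ be the largest $m$ with $a_{k_0,\ell_0,m_0}\neq 0$ — supposing for contradiction that not all coefficients vanish. Then $f_{k_0,\ell_0,m_0}$ strictly dominates every other weight function in the sum: dividing the hypothesis relation by $f_{k_0,\ell_0,m_0}(n) = (\log n)^{m_0}/n^{k_0\rho+\ell_0+1}$, every other term tends to $0$ (either because its exponent $k\rho+\ell+1$ is strictly larger, or because the exponent is equal but the log-power is strictly smaller), and the right-hand side $\mathcal{O}(n^{-p})$ divided by $f_{k_0,\ell_0,m_0}$ also tends to $0$ since $k_0\rho+\ell_0+1 < p+1$, hence $k_0\rho+\ell_0+1\le p$ forces... actually one needs $k_0\rho+\ell_0 < p$ which is exactly the summation constraint, so $n^{-p}/f_{k_0,\ell_0,m_0}(n) = n^{k_0\rho+\ell_0+1-p}/(\log n)^{m_0}\to 0$. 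Taking $n\to\infty$ yields $a_{k_0,\ell_0,m_0} = 0$, the contradiction.

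To make this fully rigorous one should phrase it as an induction: having shown $a_{k,\ell,m}=0$ for all triples whose weight function dominates a fixed $f_{k_1,\ell_1,m_1}$, subtract those (zero) terms and repeat the dominant-term extraction on the remaining tail, which still has the form of the hypothesis (a finite sum of the same shape, with error still $\mathcal{O}(n^{-p})$). Since for each fixed $p$ only finitely many triples satisfy $k\rho+\ell<p$, the induction terminates and all coefficients are shown to vanish. Alternatively, and perhaps more cleanly, one can invoke the general fact that the functions $\{n^{-s}(\log n)^m : s\in\R, m\in\Z_{\ge 0}\}$ are linearly independent over $\C$ in the sense that no nontrivial finite linear combination is $o$ of its own largest term; I would state this as a short self-contained sublemma and prove it by the same dominant-term argument.

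The main obstacle is essentially bookkeeping rather than conceptual: one must be careful that the ordering of the finitely many weight functions by eventual dominance is indeed total (this is where irrationality of $\rho$ enters, and where the rational case would require the stated restriction $k\le v$ to keep the exponents distinct), and that after subtracting the leading vanishing terms the error term on the right is unchanged. There is no analytic subtlety — everything is elementary asymptotics of $n^{-s}(\log n)^m$ — so the write-up should be short.
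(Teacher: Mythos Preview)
Your approach is exactly the paper's: peel off the dominant weight function by multiplying through by its reciprocal and letting $n\to\infty$, then induct, using the irrationality of $\rho$ to guarantee that the exponents $k\rho+\ell$ are pairwise distinct so that the ordering by eventual dominance is total. One nitpick (really a typo in the statement rather than a flaw in your argument, and the paper's own proof glosses over the same point): the summation constraint should read $k\rho+\ell+1<p$, as in the surrounding lemmas, since from $k_0\rho+\ell_0<p$ alone your claim that $n^{k_0\rho+\ell_0+1-p}/(\log n)^{m_0}\to 0$ can fail when $m_0=0$.
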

\begin{proof}
    After multiplying with $n^{\rho+1}$, taking the limit $n\to\infty$ we can see immediately that $a_{1,0,0}=0$. Proceeding by multiplying with increasing powers of $n$ and $\log n$ and using that by assumption in each case there is only one non-zero coefficient (note that we make use of the fact that $\rho\notin\Q$ here), we can inductively show that $a_{k,\ell,m}=0$ for all triples $(k,\ell,m)$. 
\end{proof}

\noindent The idea behind the following Lemma~\ref{lemma:PHF} is similar as in \cite[Lem.~6]{Ne-23}. One writes $q(x;n+1)$ recursively as a sum over the step-set, and utilizes Lemma~\ref{lem:n+1_to_n} in order to compare the coefficients. For large $n$, many of these coefficients will disappear, leaving us essentially with the (poly-)harmonicity of the $v_{k,\ell,m}$. 

\begin{lem}
\label{lemma:PHF}
    Suppose we have a combinatorial quantity $q(x;n)$ such that we have 
    \begin{equation}        \label{eq:lemma_PHF_condition1}
        q(x;n+1)=\sum_{s\in\mathcal{S}}\omega_s q(x-s;n),
        \end{equation}
and for each $p>0$ we have
        \begin{equation}        \label{eq:lemma_PHF_condition2}      q(x;n)=t_c^{-n}\sum_{\substack{k\geq 1,\ell\geq m\geq 0\\k\rho+\ell+1< p}}v_{k,\ell,m}(x)\frac{(\log{n})^m}{n^{k\rho+\ell+1} }+\mathcal{O}\left(t_c^{-n}\left(\frac{\log{n}}{n}\right)^p\right).
        \end{equation}
    Then $v_{k,\ell,m}$ is $t_c$-polyharmonic of order $\ell-m+1$. 
\end{lem}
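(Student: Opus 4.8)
The plan is to substitute the asymptotic expansion \eqref{eq:lemma_PHF_condition2} into the recurrence \eqref{eq:lemma_PHF_condition1} and match coefficients of the basis functions $f_{\ell,m}(n)=\frac{(\log n)^m}{n^{k\rho+\ell+1}}$, exploiting the explicit knowledge of how these functions shift under $n\mapsto n+1$. First I would write $q(x;n+1)-\sum_{s\in\mathcal S}\omega_s q(x-s;n)=0$, insert \eqref{eq:lemma_PHF_condition2} on both sides, and use Lemma~\ref{lem:n+1_to_n} to re-expand $f_{\ell,m}(n+1)$ as $f_{\ell,m}(n)$ plus a tail of terms $f_{\ell',m'}(n)$ with $\ell'>\ell$ and $m'\le m$ (strictly smaller powers of $n$, no larger powers of $\log n$). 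The key observation, which is the content of Figure~\ref{fig:PHF_coefficients}, is that since $t_c^{-1}$ multiplies the whole thing and $q(x;n+1)$ carries a factor $t_c^{-(n+1)}$, the left-hand side becomes $t_c^{-n}$ times a sum over $(k,\ell,m)$ of $f_{k,\ell,m}(n)$ with coefficient
\begin{equation*}
    t_c^{-1}v_{k,\ell,m}(x)-\sum_{s\in\mathcal S}\omega_s v_{k,\ell,m}(x-s) + (\text{contributions from shifts of triples with smaller }k\rho+\ell),
\end{equation*}
up to the error term $\mathcal{O}(t_c^{-n}(\log n/n)^p)$. Applying Lemma~\ref{lemma:PHF_preparatory} (which uses $\rho\notin\Q$ so that distinct triples give distinct exponents), every such coefficient must vanish.

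Next I would extract the recursion among the $v_{k,\ell,m}$ from this vanishing. For the ``top'' triples — those $(k,\ell,m)$ that are maximal in the sense that no shift of any other triple can land on $f_{k,\ell,m}$ — the only contribution is $t_c^{-1}v_{k,\ell,m}(x)-\sum_{s}\omega_s v_{k,\ell,m}(x-s) = -\triangle v_{k,\ell,m}(x)$ (up to sign; note $\triangle v(x)=\sum_s\omega_s v(x+s)-tv(x)$, evaluated at $t=t_c$, and the recurrence pairs $v(x-s)$ with steps $s$, so after reindexing one gets exactly $\triangle$ applied to $v$ at the relevant point). Hence these $v_{k,\ell,m}$ are $t_c$-harmonic. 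For the remaining triples, the vanishing condition reads
\begin{equation*}
    \triangle v_{k,\ell,m}(x) = \sum_{(k',\ell',m')} c_{k',\ell',m',k,\ell,m}\, v_{k',\ell',m'}(x),
\end{equation*}
where the sum is over triples with $k\rho+\ell < k'\rho+\ell'$ — strictly later in the ordering — coming from Lemma~\ref{lem:n+1_to_n}. Crucially, by the structure of that lemma, the triples $(k',\ell',m')$ appearing all have $k'=k$ (the $\rho$-part of the exponent is untouched by the integer shift) and $m'\le m$; moreover a term with $m'=m$ and $\ell'>\ell$ reduces $\ell-m$, while a term with $m'<m$ reduces $m$ hence also the order, so in every case $\ell'-m'+1 < \ell-m+1$. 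This is exactly what is encoded by the arrows in Figure~\ref{fig:PHF_coefficients}: applying $\triangle$ once turns $v_{k,\ell,m}$ into a combination of $v_{k,\ell',m'}$ of strictly smaller polyharmonic order.

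Finally I would conclude by a downward induction on $\ell-m+1$. The base case $\ell-m+1=1$ gives $\triangle v_{k,\ell,m}=0$, i.e.\ $t_c$-harmonic, which matches ``polyharmonic of order $1$''. For the inductive step, $\triangle v_{k,\ell,m}$ is a linear combination of functions $v_{k,\ell',m'}$ each of which, by the induction hypothesis, is $t_c$-polyharmonic of order $\ell'-m'+1 \le \ell-m$; hence $\triangle^{\ell-m}(\triangle v_{k,\ell,m})=0$, so $\triangle^{\ell-m+1}v_{k,\ell,m}=0$ and $v_{k,\ell,m}$ is $t_c$-polyharmonic of order $\ell-m+1$ as claimed. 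The main obstacle I anticipate is purely bookkeeping: carefully verifying that the shift corrections from Lemma~\ref{lem:n+1_to_n} really only ever decrease $\ell-m+1$ (never just shuffle within the same order), and handling the truncation/error-term analysis uniformly in $p$ so that Lemma~\ref{lemma:PHF_preparatory} applies to each finite truncation — in particular making sure that, when $\rho\in\Q$ and one restricts $k$ to be at most the denominator, the distinct-exponents argument still isolates one coefficient at a time. No genuinely hard analytic input is needed beyond these two lemmas; the content is entirely the combinatorics of which basis functions feed into which under the recurrence.
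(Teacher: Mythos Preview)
Your approach is essentially the paper's: substitute the expansion into the recurrence, re-expand $f_{k,\ell,m}(n+1)$ via Lemma~\ref{lem:n+1_to_n}, invoke Lemma~\ref{lemma:PHF_preparatory} to force each coefficient to vanish, and induct on the index $\ell-m$. The paper organises the induction by applying $\triangle^{r}$ to $q(x;n)$ itself (so that the lower-index coefficients disappear by the induction hypothesis and the index-$0$ argument repeats verbatim), whereas you extract the explicit relation $\triangle v_{k,\ell,m}=\sum c\,v_{k,\ell',m'}$ and induct directly on that; these are equivalent packagings of the same computation.

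One concrete slip to fix: your inequalities on the contributing triples are reversed. Lemma~\ref{lem:n+1_to_n} sends $f_{k,\ell,m}(n+1)$ to $f_{k,\ell+i,j}(n)$ with $i\ge 1$ and $j\le m$, so when you collect the coefficient of $f_{k,\ell,m}(n)$ you pick up contributions from $v_{k,\ell',m'}$ with $\ell'<\ell$ and $m'\ge m$, not $\ell'>\ell$ and $m'\le m$ as you wrote. With the signs as you have them one would get $\ell'-m'>\ell-m$ and the induction would run the wrong way; the conclusion $\ell'-m'+1<\ell-m+1$ you need is nonetheless correct, just for the opposite reason. Once the directions are corrected the argument goes through exactly as you outline.
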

\begin{proof}
To shorten notation, define the sets \begin{align*}
    \mathcal{U}:=&\{(k,\ell,m)\in\Z^3: k\geq 1, \ell\geq m\geq 0, k\rho + \ell + 1 <p\},\\ \mathcal{V}_{k,\ell,m}:=&\{(i,j)\in\Z^2: 1\leq i\leq m, k\rho + \ell + i+1 < p,m-i\leq j \leq m\}.
\end{align*}
First, we notice that by \eqref{eq:PHF_n+1_to_n} and \eqref{eq:lemma_PHF_condition2}, we have \begin{multline*}
     q(x;n+1)=\\ t_c^{-n-1}\sum_{(k,\ell,m)\in\mathcal{U}} v_{k,\ell,m}(x)\left(\frac{(\log{n})^m}{n^{k\rho+\ell+1}}+\sum_{(i,j)\in\mathcal{V}_{k,\ell,m}}c_{k,\ell,m,i,j}\frac{(\log{n})^{j}}{n^{k\rho+\ell+i+1}}\right)+\mathcal{O}\left(t_c^{-n}\left(\frac{\log{n}}{n}\right)^p\right)
 \end{multline*}
 for some constants $c_{k,\ell,m,i,j}$. Utilizing \eqref{eq:lemma_PHF_condition1}, we now obtain \begin{multline}
  \label{eq:lemma_PHF_eq2}
     t_c^{-n}\sum_{s\in\mathcal{S}}\sum_{(k,\ell,m)\in\mathcal{U}} \omega_s v_{k,\ell,m}(x-s)\frac{(\log{n})^m}{n^{k\rho+\ell+1}}=\\t_c^{-n-1}\sum_{(k,\ell,m)\in\mathcal{U}} v_{k,\ell,m}(x)\left(\frac{(\log{n})^m}{n^{k\rho+\ell+1}}+\sum_{(i,j)\in\mathcal{V}_{k,\ell,m}}c_{k,\ell,m,i,j}\frac{(\log{n})^{j}}{n^{k\rho+\ell+i+1}}\right)+\mathcal{O}\left(t_c^{-n}\left(\frac{\log{n}}{n}\right)^p\right).
 \end{multline}
 Now let us partially order the triples $(k,\ell,m)$ giving them the index $\ell-m$ (which means sorting them by their diagonal in Figure~\ref{fig:PHF_coefficients}).
 We proceed inductively by the index of the triples $(k,\ell,m)$.
 
 For index $0$, one can check immediately (using an argument as shown in Figure~\ref{fig:PHF_coefficients}; formally utilizing \eqref{eq:PHF_n+1_to_n}) that the coefficient of $\frac{(\log{n})^m}{n^{k\rho+\ell+1}}$ in the right-hand side of \eqref{eq:lemma_PHF_eq2} is precisely $v_{k,\ell,m}(x)$. From Lemma~\ref{lemma:PHF_preparatory} it follows immediately that the corresponding coefficients $v_{k,\ell,m}$ are $t_c$-harmonic.
 
 Now suppose the statement is already shown for all triples of order $r$, and consider those of order $r+1$. We utilize the same arguments as before on $\triangle^r q(x;n)$. The equivalent of \eqref{eq:lemma_PHF_eq2} now has the form 
 \begin{multline*}
     t_c^{-n}\sum_{s\in\mathcal{S}}\sum_{(k,\ell,m)\in\mathcal{U}} \omega_s \triangle^rv_{k,\ell,m}(x-s)\frac{(\log{n})^m}{n^{k\rho+\ell+1}}=\\t_c^{-n-1}\sum_{(k,\ell,m)\in\mathcal{U}} \triangle^rv_{k,\ell,m}(x)\left(\frac{(\log{n})^m}{n^{k\rho+\ell+1}}+\sum_{(i,j)\in\mathcal{V}_{k,\ell,m}}c_{k,\ell,m,i,j}\frac{(\log{n})^{j}}{n^{k\rho+\ell+i+1}}\right)+\mathcal{O}\left(t_c^{-n}\left(\frac{\log{n}}{n}\right)^p\right),
 \end{multline*}
 where we let the sum run over the triples with index at least $r+1$, since by induction hypothesis $\triangle^rv_{k,\ell,m}(x)=0$ for all $(k,\ell,m)$ with index at most $r$. But from here it is again easily seen that the coefficient of $\frac{(\log{n})^m}{n^{k\rho+\ell+1}}$ is nothing else than $\triangle^rv_{k,\ell,m}$ for the triples of index $r$; thus $\triangle^rv_{k,\ell,m}$ is harmonic and the proof is complete. 
\end{proof}

Finally, Theorem~\ref{thm:coefficients_PHF} follows from Lemmas~\ref{lem:expansion_q(s,t)} and~\ref{lemma:PHF}. Note that for the proof we did not make use of the fact that we are in dimension $2$; the only part where we used any properties of our model in particular was in the proof of Lemma~\ref{lem:expansion_q(s,t)}.

\appendix 
\section{Proof of Lemmas~\ref{lemma:q_values} and \ref{lem:real_sing}}\label{app:lemma:q_values}
\noindent We start by showing Lemma~\ref{lemma:q_values}.

\begin{proof}
    From \cite{BoBo-87} we know that we can write \begin{equation*}
        q=\exp\left(-\pi K\left(\sqrt{1-k^2}\right)/K\left(k\right)\right),
    \end{equation*}
    where $k$ is the elliptic modulus, with $0<k<1$, and $K(k)$ is the complete elliptic integral \begin{equation*}
        K(k)=\int_0^{\pi/2}\frac{\mathrm{d}\theta}{\sqrt{1-k^2\sin^2{\theta}}}.
    \end{equation*}
    Using the explicit formula \cite[(7.26)]{KuRa-12} for $k$, we see that $k\in(0,1)$ for $t\in(0,t_c)$, so the first point follows. Making use of the fact that zeros of a polynomial are generically continuous in its coefficients, we see that $q(t)$ is continuous for $t\in(0,t_c)$.
     
    It is shown in \cite[7.4]{KuRa-12} that if $t\to 0$, then $k\to 1$, which implies that $\lim_{t\to 0}q(t)=0$. By the same argument we can see that, using the notation as in \cite{FaIaMa-17,KuRa-12}, if as $t\to t_c$ we have $x_2\to x_3$, then forcibly $\lim_{t\to t_c}q(t)=1$. But this can be seen with a straightforward adaptation of \cite[Sec.~2.3]{FaIaMa-17} (note that for our model the discriminant vanishes for no $t$ at either $0$ or $\infty$).
 \end{proof}

\noindent\textbf{Remark:} While not needed here, it seems plausible that $q(t)$ is in fact increasing for $t\in(0,t_c)$. A proof of this would likely come down to the study of the zeros $x_1$ to $x_4$ of the discriminant (see e.g.~\cite{KuRa-12}) and get rather technical.

\medskip

\noindent We will now proceed to prove Lemma~\ref{lem:real_sing}. To do so, we first show the preparatory Lemma~\ref{lem:lsingularity}.

\begin{lem}\label{lem:lsingularity} Let $\mathcal S$ be a weighted step-set, let $Q(x,y)$ be the generating function \eqref{eq:def_series} for walks in the quadrant using this step-set and let $r$ be the radius of convergence of $Q(0,0)$. If $\ell\in\mathbb{N}$ satisfies $[t^\ell] Q(0,0)\neq 0$, then each generating function $[x^a][y^b]Q(x,y)$ has no singularities $t_{c}$ for $\vert t_{c}\vert \leq r$ except possibly at points $re^{\frac{2\pi i j}{\ell}}$ for $j=0,1,\ldots,\ell-1$.
\end{lem}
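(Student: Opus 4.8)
The plan is to prove this by combining two off-the-shelf facts about power series with nonnegative coefficients — Pringsheim's theorem and the description of the singularities on the circle of convergence in terms of the period of the support (the ``Daffodil'' statement, see \cite[Ch.~IV]{FlSe-09}) — applied \emph{directly} to $F:=[x^a][y^b]Q(x,y)$, together with one short combinatorial divisibility observation. First I would dispose of the trivial cases: if $F\equiv 0$, or if $F$ is a polynomial, there is nothing to prove. Otherwise let $r'$ be the radius of convergence of $F$, write $f_n:=q((0,0),(a,b);n)$ for its coefficients and $a_n:=[t^n]Q(0,0)$, and set $p:=\gcd\{n\geq 1: f_n\neq 0\}$. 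Since the weights $\omega_s$ are positive, all $f_n$ and $a_n$ are nonnegative, so both Pringsheim and the periodicity statement apply.

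The first step is to show $r'=r$. Fix one walk $\omega$ from $(0,0)$ to $(a,b)$ staying in the quadrant, of length $d_1$ and (positive) weight $w_1$; concatenating an arbitrary quadrant excursion of length $n$ with $\omega$ yields a quadrant walk to $(a,b)$ of length $n+d_1$, and these are distinct for distinct excursions, so $f_{n+d_1}\geq w_1 a_n$, giving $r'\leq r$. Conversely, fix one walk $\omega'$ from $(a,b)$ back to $(0,0)$ in the quadrant, of length $d_2$ and weight $w_2>0$; concatenating a walk to $(a,b)$ of length $n$ with $\omega'$ yields an excursion of length $n+d_2$, so $a_{n+d_2}\geq w_2 f_n$ and $r\leq r'$. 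Hence $r'=r$, and in particular $F$ has no singularity in $\{|t|<r\}$. (Alternatively, $r'=r$ for every $(a,b)$ is a consequence of the known first-term asymptotics \eqref{eq:asymptotic_DeWa-15}.)

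The second step is the divisibility $p\mid\ell$: by hypothesis $[t^\ell]Q(0,0)\neq 0$, so there is a quadrant excursion $e$ from $(0,0)$ to $(0,0)$ of length $\ell$; prepending $e$ to $\omega$ gives a quadrant walk to $(a,b)$ of length $\ell+d_1$ with positive weight, so $p\mid(\ell+d_1)$, and since also $p\mid d_1$ we get $p\mid\ell$. Now conclude: by Pringsheim's theorem together with the periodicity lemma (\cite[Ch.~IV]{FlSe-09}), the singularities of $F$ on its circle of convergence $\{|t|=r\}$ are exactly the points $re^{2\pi i j/p}$, $j=0,\dots,p-1$; since $p\mid\ell$, each of these is of the form $re^{2\pi i j'/\ell}$, and by the first step there are no singularities with $|t|<r$. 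This is precisely the assertion of the lemma.

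The part I expect to require the most care is the equality $r'=r$ in the first step — specifically the inequality $r\leq r'$, which genuinely uses the existence of a walk from $(a,b)$ back to the origin (the bound $f_{n+d_1}\geq w_1 a_n$ alone only gives $r'\leq r$, and a priori $F$ could have a strictly smaller radius, hence a singularity strictly inside $|t|\leq r$ not of the prescribed form). For the models at hand this connectivity is immediate: one reaches $(a,b)$ from the origin and then returns using only the $\leftarrow$ and $\downarrow$ steps in the Kreweras case, or only the $\sf W$ and $\sf S$ steps in the infinite-group case; more generally it can be read off from \eqref{eq:asymptotic_DeWa-15}. Everything else is routine — Pringsheim and the periodicity lemma are standard, and the divisibility $p\mid\ell$ (the prepend-an-excursion trick) is the only genuinely model-specific input, and it is very short.
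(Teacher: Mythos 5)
There is a genuine gap at the final step. Pringsheim's theorem gives you that $r$ itself is a singularity of $F:=[x^a][y^b]Q(x,y)$, and the span argument (your $p$) then shows that \emph{all} of the points $re^{2\pi i j/p}$ are singularities, since $F(t)=t^uG(t^p)$ for some $G$. But neither Pringsheim nor the Daffodil lemma of \cite[Ch.~IV]{FlSe-09} tells you that these are the \emph{only} singularities on the circle $\vert t\vert=r$ --- and that exclusion is precisely the content of the lemma. The Daffodil lemma is a statement about where $\vert F(t)\vert=F(\vert t\vert)$ can hold \emph{inside} the disc of convergence; it does not control the singular set on the boundary. Indeed, the assertion you invoke is false for general power series with nonnegative coefficients: a lacunary series such as $\sum_n t^{n!}$ has nonnegative coefficients, span $1$, and the entire unit circle as a natural boundary. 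So some combinatorial input specific to these generating functions is required to rule out extra boundary singularities, and your proposal supplies none.

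That input is exactly what the paper's proof provides. It splits $Q$ according to length modulo $\ell$, uses the hypothesis $[t^\ell]Q(0,0)\neq0$ to produce one fixed excursion $\Omega$ of length $\ell$ and positive weight $w_{\Omega}$, and observes that inserting copies of $\Omega$ yields the exact functional relation $Q_{j}(x,y;t)=\frac{1}{1-w_{\Omega}t}\tilde{Q}_{j}\bigl(x,y;\frac{t}{1-w_{\Omega}t}\bigr)$. Pringsheim is applied to $\tilde{Q}_{j}$, whose radius of convergence is then a real positive singularity, and a triangle-inequality computation with the M\"obius substitution $t\mapsto t/(1-w_{\Omega}t)$ forces any singularity of $Q_j$ of modulus $r^{\ell}$ to lie on the positive real axis. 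Your preliminary steps are fine --- the two concatenation bounds giving $r'=r$ are correct (and make explicit a point the paper passes over quickly), as is the divisibility $p\mid\ell$ --- but without a pumping argument of this kind the conclusion does not follow from them.
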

\begin{proof}
We start by writing
\begin{equation*}
   Q(x,y;t)=Q_{0}(x,y;t^{\ell})+tQ_{1}(x,y;t^{\ell})+\cdots+t^{\ell-1}Q_{\ell-1}(x,y;t^{\ell}),
\end{equation*}
so $t^{j}Q_{j}(x,y;t^{\ell})$ counts walks whose length is $j$ more than a multiple of $\ell$. The radius of convergence of $[x^a y^b]t^{j}Q_{j}(x,y;t^{\ell})$ is no less than $r$ as its coefficients are bounded above by the coefficients of $[x^a y^b]Q(x,y;t)$. Hence each $[x^a y^b]Q_{j}(x,y;t)$ has no singularities in $\vert t\vert \leq r^{\ell}$. We will now prove that each $[x^a y^b]Q_{j}(x,y;t)$ has no singularities in $\vert t\vert =r^{\ell}$ except possibly at $t=r^{\ell}$. Note that $[x^a y^b]Q_{j}(x,y;t)$ counts weighted quadrant walks from $(0,0)$ to $(a,b)$ which can be cut into successive walks $w_1,w_2,w_3,\ldots,w_m$ of length $\ell$ followed by a walk $v$ of length $j$, where $t$ counts the number of walks of length $\ell$. 
Let $\Omega$ be a path of length $\ell$ from $(0,0)$ to $(0,0)$ in the quadrant and let $w_{\Omega}>0$ be the weight of $\Omega$. Now let $\tilde{Q}_{j}(x,y;t)$ be the generating function for walks counted by $Q_{j}(x,y;t)$, where none of the subpaths $w_{k}$ is equal to $\Omega$. Note then that any walk counted by $Q_{j}(x,y;t)$ can be uniquely constructed by taking a walk counted by $\tilde{Q}_{j}(x,y;t)$ and inserting any number of copies of $\Omega$ before each $w_{k}$ and $v$. Hence
\begin{equation*}
   Q_{j}(x,y;t)=\frac{1}{1-w_{\Omega}t}\tilde{Q}_{j}\left(x,y,\frac{t}{1-w_{\Omega}t}\right).
\end{equation*}
Now, the series $F(t):=\tilde{Q}_{j}(x,y;t)$ has non-negative coefficients, so its radius of convergence $t_{F}>0$ is a singularity of $F(t)$. Moreover, $F(t)$ satisfies
\begin{equation*}
   [x^a][y^b]Q_{j}(x,y;t)=\frac{1}{1-w_{\Omega}t}F\left(\frac{t}{1-w_{\Omega}t}\right).
\end{equation*}
Hence $[x^a][y^b]Q_{j}(x,y;t)$ has a corresponding singularity at $t_{Q}=\frac{t_{F}}{1+w_{\Omega}t_{F}}>0$, so we must have $t_{Q}\geq r^{\ell}$. If $[x^a][y^b]Q_{j}(x,y;t)$ has another singularity $t_{0}$ satisfying $\vert t_0\vert =r^{\ell}$, then $\frac{t_{0}}{1-w_{\Omega}t_{0}}$ is a singularity of $F$, so
\begin{equation*}
   \frac{r^{\ell}}{\vert 1-w_{\Omega}t_{0}\vert }=\frac{\vert t_{0}\vert }{\vert 1-w_{\Omega}t_{0}\vert }\geq t_{F}=\frac{t_{Q}}{1-w_{\Omega}t_{Q}}\geq \frac{r^{\ell}}{1-w_{\Omega}t_{Q}},
\end{equation*}
hence
\begin{equation*}
   \vert 1-w_{\Omega}t_{0}\vert \leq 1-w_{\Omega}t_{Q}\leq 1-w_{\Omega}r^{\ell}=1-\vert w_{\Omega}t_{0}\vert .
\end{equation*}
By the triangle inequality, this is only possible if $w_{\Omega}t_{0}>0$, i.e.~$t_0=r^{\ell}$. Hence $[x^a][y^b]Q_{j}(x,y;t)$ has no other singularities $t_0$ satisfying $\vert t_0\vert =r^{\ell}$. Therefore, the series $[x^a][y^b]t^{j}Q_{j}(x,y;t^{\ell})$ has no singularities on the radius of convergence $r$ except possibly at points $re^{\frac{2\pi i j}{\ell}}$ for $j=0,1,\ldots,\ell-1$. Since this is true for all $j$, it follows that the same statement holds for $[x^a][y^b]Q(x,y;t)$.
\end{proof}

\noindent We can now proceed with the proof of Lemma~\ref{lem:real_sing}.
\begin{proof}
First, since $Q_{a,b}(t):=[x^a][y^b]Q(x,y)$ is non-constant, it must have the same radius of convergence $r$ as $Q(0,0)$. Moreover, since $Q_{a,b}(t)$ has only non-negative coefficients, $r$ must be a singularity. Since $k$ is a period of the model, the powers of $t$ appearing in the generating function $Q_{a,b}(t)$ must all have the same residue $u$ modulo $k$. This means that for each integer $j$, we can write
\begin{equation*}
   Q_{a,b}(e^{\frac{2\pi i j}{k}}t)=e^{\frac{2\pi i ju}{k}}Q_{a,b}(t),
\end{equation*}
so $re^{\frac{2\pi i j}{k}}$ is a singularity of $Q_{a,b}(t)$, as claimed. Now suppose for the sake of contradiction that there is some other singularity $r\kappa$ on the radius of convergence, with $\vert \kappa\vert =1$ but $\kappa^k\neq 1$. Let $\eta>0$ be minimal such that $\kappa^\eta=1$ (with $\eta:=\infty$ and $\eta\mathbb{Z}:=\{0\}$ if $\kappa$ is not a root of unity). Then $k\notin\eta\mathbb{Z}$ since $\kappa^k\neq 1$. It follows from the maximality of $k$ that there is some $\ell$ satisfying $[t^\ell]Q(0,0)\neq 0$ and $\ell\notin\eta\mathbb{Z}$. From Lemma~\ref{lem:lsingularity}, the singularity $r\kappa$ must satisfy $\kappa^{\ell}=1$, but this is a contradiction as $\ell\notin\eta\mathbb{Z}$.
\end{proof}

\end{document}